   \newtheorem{thm}{Theorem}
   \newtheorem{prop}{Proposition}
   \newtheorem{lem}{Lemma}[section]
   \newtheorem{rem}{Remark}[section]
\newcommand{\N}{\mathbb{N}}
\newcommand{\R}{\mathbb{R}}
\newcommand{\C}{\mathbb{C}}
\newcommand{\E}{\mathbb{E}}
\newcommand{\cH}{\mathcal{H}}
\newcommand{\prob}{\mathbb{P}}
\newcommand{\ds}{\displaystyle}
\newcommand{\vertiii}[1]{{\left\vert\kern-0.25ex\left\vert\kern-0.25ex\left\vert #1 
    \right\vert\kern-0.25ex\right\vert\kern-0.25ex\right\vert}}
\begin{document}

\title{Stationary martingale solution for the 2D stochastic Gross-Pitaevskii equation}
\author[Anne de Bouard]{Anne de Bouard$^{\scriptsize 1}$}
\author[Arnaud Debussche]{Arnaud Debussche$^{\scriptsize 2,3}$}
\author[Reika Fukuizumi]{Reika FUKUIZUMI$^{\scriptsize 4}$}


\maketitle

\begin{center} \small
$^1$  CMAP, CNRS, Ecole polytechnique, I.P. Paris\\
91128 Palaiseau, France; \\
\email{anne.debouard@polytechnique.edu}
\end{center}

\begin{center} \small
$^2$ Univ Rennes, CNRS, IRMAR - UMR 6625, F-35000 Rennes, France;\\
\end{center}

\begin{center} \small
$^3$ Institut Universitaire de France (IUF);\\
\email{arnaud.debussche@ens-rennes.fr}
\end{center}

\begin{center} \small
$^4$ Research Center for Pure and Applied Mathematics, \\
Graduate School of Information Sciences, Tohoku University,\\
Sendai 980-8579, Japan; \\
\email{fukuizumi@math.is.tohoku.ac.jp}
\end{center}

\section{Introduction} 

In this short report we give a proof of the existence of a stationary solution to 
the following Gross-Pitaevskii equation in $2d$ driven by a space-time white noise:
\begin{equation} \label{eq:SGP}
dX=(\gamma_1 + i\gamma_2)(HX -|X|^{2} X)dt +\sqrt{2\gamma_1}dW, \quad t>0, \quad x \in \R^2,  
\end{equation}
where $H=\Delta-|x|^2$, $\gamma_1 >0$, and $\gamma_2 \in \R$. 
The unknown function $X$ is a complex valued random field on a probability space $(\Omega, \mathcal{F}, \prob)$ endowed 
with a standard filtration $(\mathcal{F}_t)_{t\ge 0}$.
This equation is used as a model for Bose-Einstein condensates in the presence of temperature effects. 
There are some studies in the physics literature using this model \cite{bbdbg,ds,gd,w}. 
We are interested in this equation from a mathematical point of view, and we have studied the $1d$ case in \cite{dbdf0}
 in particular the properties of the statistical equilibrium.  
\vspace{3mm}

As is the case for the stochastic quantization equations \cite{dpd},
the use of renormalization is necessary in order to give a meaning to the solutions of \eqref{eq:SGP}, 
as the Gaussian measure generated by the linear equation is only supported in $\mathcal{W}^{-s,q}$, with $s>0$, $q \ge 2$, and $sq>2$, 
where for $1 \le p \le +\infty$, $\sigma \in \R,$ 
\begin{equation*}
\mathcal{W}^{\sigma,p}(\R^2)=
\{v \in \mathcal{S}'(\R^2), \; |v|_{\mathcal{W}^{\sigma,p}(\R^2)}:=|(-H)^{\sigma/2} v|_{L^p(\R^2)} <+\infty \},
\end{equation*}
denote the Sobolev space associated with the operator $H$.
\vspace{3mm}

Renormalization procedures, using Wick products, have been by now widely used in the context of singular stochastic partial differential
equations, in particular for parabolic equations based on gradient flows (see for example \cite{dpd,tw} for the $2d$ case). 
The complex Ginzburg-Landau equation
driven by space-time white noise, i.e. \eqref{eq:SGP} without the harmonic potential, posed on the three-dimensional torus,
was studied in \cite{h} and for the two-dimensional torus in \cite{m, trenberth}. 
The main difference in our case is the presence of the harmonic potential $|x|^2$. 
\vspace{3mm}

The proof in this report will not be published anywhere, nore in \cite{dbdf1} where the existence of strong global solution for (\ref{eq:SGP}), 
i.e., much stronger result is established. In fact, the proof of this report was our first try for ensuring the existence of a solution to (\ref{eq:SGP}) 
for {\it any} dissipation parameter $\gamma_1$: indeed,  if $\gamma_1$ is sufficiently large we may use simply as in the purely parabolic case \cite{tw} 
an $L^p$ energy estimate to globalize the solution. 
The small $\gamma_1$ case was later solved by using ideas inspired by the bootstrap arguments used
in [10] (although the proof in [10] does not directly applies to the present case), and the proof for any $\gamma_1$ is written in \cite{dbdf1}. 
However, we think it is interesting to present this first proof in this report, with some precisions
 about the dependence of estimates of stationary solutions on the dissipation parameters.

\vspace{3mm}

\section{notation and main result} 

In what follows, we will use the following notation: 
 Let $\{h_k\}_{k \in \N^2}$ be the orthonormal basis of $L^2(\R^2, \R)$, consisting of eigenfunctions of $-H$ 
with corresponding eigenvalues $\{\lambda_k^2\}_{k\in \N^2}$, i.e. $-H h_k=\lambda_k^2 h_k$, $\lambda_k^2=2|k|+2$. 
We take $\{h_k , i h_k \}_{k \in \N^2}$ as a complete orthonormal
system in $L^2(\R^2, \C)$, and we may write the cylindrical Wiener process in \eqref{eq:SGP} as  
\begin{equation} \label{WienerProcess}
W (t, x)= \sum_{k\in \N^2} (\beta_{k,R} (t) + i\beta_{k,I}(t)) h_k (x).
\end{equation}
Here, $(\beta_{k,R} (t))_{t \ge 0}$ and $(\beta_{k,I} (t))_{t \ge 0}$ are sequences of independent real-valued Brownian motions on the stochastic basis $(\Omega, \mathcal{F}, \prob, (\mathcal{F}_t)_{t\ge 0}).$ The notation $\E$ stands for
the expectation with respect to $\prob$.
\vspace{3mm}

We will use an approximation by finite dimensional objects. To verify the convergence properties of a function series of the form 
$u=\sum_{k\in \N^2} c_k h_k,$
we define, for any $N \in \N$ fixed, for any $p \in [1,\infty]$, and $s\in \R$, 
a smooth projection operator $S_N: L^2(\R^2,\C)  \to E_N^{\C} := \mathrm{span}\{h_k\}_{|k|\le N}$ by 
\begin{equation}
\label{def:SN}
S_N \Big[\sum_{k \in \N^2} c_k h_k \Big]:= \sum_{k \in \N^2} \chi \Big[\frac{\lambda_k^2}{\lambda_N^2}\Big] c_k h_k 
=\chi\Big[\frac{-H}{\lambda_N^2}\Big] \Big[\sum_{k \in \N^2} c_k h_k\Big], 
\end{equation}
where $\chi \ge 0$ is a cut-off function such that $\chi \in C_0^{\infty}(-1,1)$, $\chi=1$ on $[-\frac{1}{2}, \frac{1}{2}].$ 
Note that here and in what follows, we denote by $\lambda_N$ the value $\lambda_{(N,0)}$, for simplicity.
The operator $S_N$, which is self-adjoint and commutes with $H$, may be extended by duality to any Sobolev space $\mathcal{W}^{s,2}(\R^2; \C)$,
with $s\in \R$, and thus by Sobolev embeddings, to any space $\mathcal{W}^{s,p}(\R^2; \C)$, with $p\ge 1$.
A simple modification of Theorem 1.1 of \cite{jn} 
implies that $S_N$ is a bounded operator from $L^p$ to $L^p$, uniformly in $N$,  
for any $p \in [1,\infty]$. We denote the usual spectral projector by  
$$\Pi_{N}\Big[\sum_{k \in \N^2} c_k h_k \Big]:= \sum_{k \in \N^2, |k|\le N} c_k h_k,$$ 
which is uniformly bounded only in $L^2$. 
\vspace{3mm} 

Let us recall known facts before mentioning precisely our results. Writing the solution of (\ref{eq:SGP}) as $X=u+Z_{\infty}^{\gamma_1, \gamma_2}$ with  
\begin{equation} \label{eq:Zinfini}
Z_{\infty}^{\gamma_1, \gamma_2}(t)=\sqrt{2\gamma_1}\int_{-\infty}^t e^{(t-\tau) (\gamma_1+i\gamma_2) H} dW(\tau),  
\end{equation}
which is the stationary solution for the linear stochastic equation
\begin{equation} \label{eq:Z}
dZ=(\gamma_1+i\gamma_2)HZdt +\sqrt{2\gamma_1} dW,  
\end{equation}
we find out the following random partial differential equation for $u$:
\begin{equation} \label{eq:u}
\partial_t u = (\gamma_1+i\gamma_2) (Hu-|u+Z_{\infty}^{\gamma_1, \gamma_2}|^2(u+Z_{\infty}^{\gamma_1, \gamma_2})), \quad u(0)=u_0:=X(0)-Z_{\infty}^{\gamma_1, \gamma_2}(0). 
\end{equation}
We are therefore required to solve this random partial differential equation. However, using standard arguments, it is not difficult to see that 
the best regularity we may expect for $Z_\infty^{\gamma_1, \gamma_2}$ is almost surely : 
$Z_{\infty}^{\gamma_1, \gamma_2} \in \mathcal{W}^{-s,q}(\R^2)$ for $s>0$, $q \ge 2,$ $sq>2$ as follows.  
\begin{lem} \label{lem:Kolmogorov} 
Fix any $T>0$. Let $\gamma_1>0, \gamma_2 \in \R, s>0, q \ge 2, sq>2$ and $0<\alpha <\frac 12[(s-\frac2q ) \wedge 1]$. 
The stationary solution $Z_{\infty}^{\gamma_1, \gamma_2}$ of  (\ref{eq:Z}) has a modification in $C^{\alpha}([0,T], \mathcal{W}^{-s,q})$. 
Moreover, there exists a positive constant $C_{T}$ such that 
$$ \E\left[\sup_{t\in [0,T]} |Z^{\gamma_1,\gamma_2}_{\infty}(t) |_{\mathcal{W}^{-s,q}}\right]  \le C_T.$$
\end{lem}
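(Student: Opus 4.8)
The plan is to exploit the explicit Gaussian structure of $Z_\infty^{\gamma_1,\gamma_2}$ and to reduce everything to a single deterministic kernel estimate. First I would expand in the eigenbasis: writing $Z_\infty^{\gamma_1,\gamma_2}(t)=\sum_k z_k(t)h_k$, the coefficients
$$z_k(t)=\sqrt{2\gamma_1}\int_{-\infty}^t e^{-(t-\tau)(\gamma_1+i\gamma_2)\lambda_k^2}(d\beta_{k,R}(\tau)+i\,d\beta_{k,I}(\tau))$$
are independent complex stationary Ornstein--Uhlenbeck processes, the Wiener integrals converging precisely because $\gamma_1>0$. A direct It\^o isometry gives the key identity $\E[|z_k(t)|^2]=2/\lambda_k^2$, which is \emph{independent} of $\gamma_1,\gamma_2$ and is the source of the announced uniformity of the marginal law in the dissipation parameters. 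For increments, splitting $z_k(t)=e^{-(t-t')(\gamma_1+i\gamma_2)\lambda_k^2}z_k(t')+(\text{an $\mathcal F_{t'}$-independent term})$ and using $|e^{-a}-1|\le C|a|^{2\alpha_0}$ together with $1-e^{-b}\le Cb^{2\alpha_0}$ for $2\alpha_0\in[0,1]$, I obtain
$$\E[|z_k(t)-z_k(t')|^2]\le C(\gamma_1,\gamma_2)\,|t-t'|^{2\alpha_0}\lambda_k^{4\alpha_0-2}.$$

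Second, I would convert these $L^2$ bounds into $\mathcal{W}^{-s,q}$ moment bounds. Since $(-H)^{-s/2}Z_\infty^{\gamma_1,\gamma_2}(t)(x)=\sum_k\lambda_k^{-s}z_k(t)h_k(x)$ is, for fixed $(t,x)$, a centred complex Gaussian with $\sigma^2(x):=\E[|(-H)^{-s/2}Z_\infty^{\gamma_1,\gamma_2}(t)(x)|^2]=2\sum_k\lambda_k^{-2s-2}h_k(x)^2$, the Gaussian moment equivalence $\E[|G|^p]=C_p(\E[|G|^2])^{p/2}$ combined with Minkowski's integral inequality (valid since $p\ge q$) yields
$$\E\big[|Z_\infty^{\gamma_1,\gamma_2}(t)|_{\mathcal{W}^{-s,q}}^p\big]\le C_{p,q}\Big(\int_{\R^2}\sigma^2(x)^{q/2}\,dx\Big)^{p/q}=C_{p,q}\,\|G_{s+1}\|_{L^{q/2}}^{p/2},$$
where $G_\beta(x):=\sum_k\lambda_k^{-2\beta}h_k(x)^2$ is the diagonal of the kernel of $(-H)^{-\beta}$. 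The same computation applied to the increments replaces $G_{s+1}$ by $G_{s+1-2\alpha_0}$ and produces a factor $|t-t'|^{p\alpha_0}$.

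Third --- and this is the main obstacle, the only place where the harmonic potential genuinely enters --- I would establish the kernel estimate $\|G_\beta\|_{L^{q/2}(\R^2)}<\infty$. Using the subordination formula $(-H)^{-\beta}=\Gamma(\beta)^{-1}\int_0^\infty t^{\beta-1}e^{tH}\,dt$ together with the explicit $2D$ Mehler kernel on the diagonal, of the form $e^{tH}(x,x)=(2\pi\sinh 2t)^{-1}e^{-|x|^2\tanh t}$, one sees that $G_\beta(0)<\infty$ exactly when $\beta>1$, controlling the $t\to 0$ singularity and forcing $s>0$, while for large $|x|$ the small-$t$ regime gives the decay $G_\beta(x)\sim|x|^{-2(\beta-1)}$. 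Hence $\|G_{s+1}\|_{L^{q/2}}^{q/2}\sim\int_{\R^2}|x|^{-sq}\,dx<\infty$ precisely when $sq>2$, which is where that hypothesis is used; the analogous requirement for the increment kernel $G_{s+1-2\alpha_0}$ reads $s-2\alpha_0>0$ and $(s-2\alpha_0)q>2$, that is $\alpha_0<\tfrac12(s-\tfrac2q)$.

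Finally, collecting the resulting estimate
$$\E\big[|Z_\infty^{\gamma_1,\gamma_2}(t)-Z_\infty^{\gamma_1,\gamma_2}(t')|_{\mathcal{W}^{-s,q}}^p\big]\le C_{T,p}\,|t-t'|^{p\alpha_0}$$
valid for every $\alpha_0<\tfrac12[(s-\tfrac2q)\wedge 1]$ and every large $p$, I would invoke the Kolmogorov--Chentsov continuity theorem in its quantitative form: it produces a modification in $C^\alpha([0,T],\mathcal{W}^{-s,q})$ for any $\alpha<\alpha_0-\tfrac1p$, hence, letting $p\to\infty$, for any $\alpha<\tfrac12[(s-\tfrac2q)\wedge 1]$, and it simultaneously delivers the bound $\E[\sup_{t\in[0,T]}|Z_\infty^{\gamma_1,\gamma_2}(t)|_{\mathcal{W}^{-s,q}}]\le C_T$ via $\sup_t|Z_\infty^{\gamma_1,\gamma_2}(t)|_{\mathcal{W}^{-s,q}}\le|Z_\infty^{\gamma_1,\gamma_2}(0)|_{\mathcal{W}^{-s,q}}+T^\alpha[Z_\infty^{\gamma_1,\gamma_2}]_{C^\alpha}$, whose two summands both have finite expectation by the marginal and increment bounds above.
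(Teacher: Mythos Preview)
The paper does not actually supply a proof of this lemma; it is stated as a standing fact with the remark ``using standard arguments, it is not difficult to see\ldots''. Your proposal is therefore being judged on its own merits, and it is correct: the eigenfunction expansion reduces the problem to independent complex Ornstein--Uhlenbeck coefficients, the increment bound $\E|z_k(t)-z_k(t')|^2\lesssim |t-t'|^{2\alpha_0}\lambda_k^{4\alpha_0-2}$ follows by interpolating $|e^{-z}-1|\le C|z|^{2\alpha_0}$ for $\mathrm{Re}\,z\ge 0$, and the Gaussian moment/Minkowski step is the standard way to pass from pointwise second moments to $L^p(\Omega,\mathcal W^{-s,q})$ bounds. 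Your identification of the kernel $G_\beta(x)=\sum_k\lambda_k^{-2\beta}h_k(x)^2$ as the diagonal of $(-H)^{-\beta}$ and its analysis via the Mehler formula is exactly the place where the harmonic potential and the whole-space setting enter, and your computation is right: the small-$t$ behaviour $e^{tH}(x,x)\sim (4\pi t)^{-1}$ forces $\beta>1$ (hence $s>0$) for local integrability, while the large-$|x|$ decay $G_\beta(x)\sim c|x|^{-2(\beta-1)}$ forces $(\beta-1)q>2$ (hence $sq>2$, and $(s-2\alpha_0)q>2$ for the increment kernel). The final appeal to Kolmogorov--Chentsov with $p\to\infty$ recovers the full range of $\alpha$.

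One very minor remark: in the displayed inequality $\E[|Z_\infty|_{\mathcal W^{-s,q}}^p]\le C_{p,q}\|G_{s+1}\|_{L^{q/2}}^{p/2}$ you should note that $\sigma^2(x)=2G_{s+1}(x)$, so the constant absorbs a factor of $2$; and when you invoke $|e^{-a}-1|\le C|a|^{2\alpha_0}$ for the complex exponent $a=(t-t')(\gamma_1+i\gamma_2)\lambda_k^2$, the constant $C$ is indeed universal but the resulting bound carries the factor $|\gamma_1+i\gamma_2|^{2\alpha_0}$, which is where the $(\gamma_1,\gamma_2)$-dependence of the increment constant (but not of the marginal constant) comes from --- consistent with your parenthetical comment. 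These are cosmetic; the argument is complete and is precisely the ``standard argument'' the paper alludes to.
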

\vspace{3mm}

Thus, to give a sense to the nonlinearity in (\ref{eq:u}), we need a renormalization and we will consider in place 
the renormalized equation of (\ref{eq:u}) :
 \begin{equation} \label{eq:wicku}
\partial_t u = (\gamma_1+i\gamma_2) (Hu-:|u+Z_{\infty}^{\gamma_1, \gamma_2}|^2(u+Z_{\infty}^{\gamma_1, \gamma_2}):), \quad u(0)=u_0:=X(0)-Z_{\infty}^{\gamma_1, \gamma_2}(0), 
\end{equation}
where $:~ :$ in the nonlinear part,  using the notation $Z_{R, \infty}=\mathrm{Re}\,(Z_{\infty}^{\gamma_1, \gamma_2})$, 
$Z_{I, \infty}=\mathrm{Im}\,(Z_{\infty}^{\gamma_1, \gamma_2})$, $u_R=\mathrm{Re} \,u$, and $u_I=\mathrm{Im} \,u$, 
means 
\begin{equation}
\label{def:F}
:|u+Z_{\infty}^{\gamma_1, \gamma_2}|^2 (u+Z_{\infty}^{\gamma_1, \gamma_2}): 
= F\left(u, (:Z_\infty^l:)_{1\le l \le 3}\right) =F_0 +F_1+F_2+F_3
\end{equation}
with $F_0 =|u|^2u$, and
\begin{eqnarray*}
F_1 &= &Z_\infty |u|^2 +2 Z_{R,\infty} u_R u+2 Z_{I,\infty} u_I u,\\
F_2 &= &: Z_{R,\infty}^2: (3u_R+iu_I) \, + :Z_{I,\infty}^2: (u_R + 3iu_I) +2:Z_{R,\infty}Z_{I,\infty}:(u_I+iu_R),\\
F_3& = & :Z_{R,\infty}^3: +\, i:Z_{I,\infty}^3: + :Z_{R,\infty} Z_{I,\infty}^2: + \,i:Z_{R,\infty}^2 Z_{I,\infty}: .
\end{eqnarray*}
Here, for any $k,l \in \N$, the Wick products $:(Z_{R,\infty})^{k} (Z_{I,\infty})^{l}:$ are defined as follows. 
\vspace{3mm}

Recall that the Hermite polynomials $H_n(x)$, $n \in \N$ are defined by 
\begin{equation} \label{eq:HermitePoly}
H_n(x)=\frac{(-1)^n}{\sqrt{n!}} e^{\frac{x^2}{2}} \frac{d^n}{dx^n}(e^{-\frac{x^2}{2}}), \quad n\ge 1
\end{equation}
and $H_0(x)=1$. 
\vspace{3mm}
 
The notation $:(S_N z)^n:(x)$ for $n \in \N$, $N\in \N$, $x\in \R^2$, with a real-valued 
centered Gaussian white noise $z$, means 
$$:(S_N z)^n :(x) = \rho_N(x)^n \sqrt{n !} H_n\left[\frac{1}{\rho_N(x)} S_N z(x)\right], \quad x \in \R^2$$ 
with 
$$\rho_N(x)=\left[\sum_{k \in \N^2} \chi^2\left(\frac{\lambda_k^2}{\lambda_N^2}\right) \frac{1}{\lambda_k^2} (h_k(x))^2\right]^{\frac12}.$$

It is known (see \cite{dbdf0}) that the law $\mathcal{L}(Z_{\infty}^{\gamma_1, \gamma_2})$ 
equals the complex Gaussian measure $\mu=\mathcal{N}_{\C}(0, 2(-H)^{-1}).$
\vspace{3mm}

\begin{prop} \label{prop:ReImCauchy} (\cite{dbdf1})
For any $k,l \in \N$, 
the sequence $\{:(S_N Z_{R,\infty})^{k}: :(S_N Z_{I,\infty})^{l}:\}_{N\in \N}$ is a Cauchy sequence in $L^q(\Omega, \mathcal{W}^{-s,q}(\R^2))$, 
for $q >2$, $s>0$ with $qs >2$.
\vspace{3mm}

Moreover, defining then, for any $k,l\in \N$, for any fixed $t$,
\begin{eqnarray*}
:(Z_{R,\infty})^{k} (Z_{I,\infty})^{l}:~ &:=& \lim_{N\to\infty} :(S_N Z_{R,\infty})^{k}: :(S_N Z_{I,\infty})^{l}:, 
\quad \mbox{in} \quad L^q(\Omega, \mathcal{W}^{-s,q}(\R^2)),
\end{eqnarray*}
where $s>0, q>2$ and $sq>2$, there exists a constant $M_{s,q,k,l}$ such that 
\begin{equation} \label{bound:Z}
\E\left[|:(Z_{R,\infty})^k (Z_{I,\infty})^l:|_{\mathcal{W}^{-s,q}}^q\right] \le M_{s,q,k,l}.
\end{equation}
\end{prop}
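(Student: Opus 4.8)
The plan is to reduce the statement to a pointwise second-moment estimate by exploiting that each approximation lives in a fixed, finite sum of Wiener chaoses, and then to carry out that estimate through the covariance kernel of the harmonic oscillator. Write $f_N := \,:(S_N Z_{R,\infty})^{k}: \,:(S_N Z_{I,\infty})^{l}:$. Since $:(S_N Z_{R,\infty})^k:$ lies in the $k$-th chaos and $:(S_N Z_{I,\infty})^l:$ in the $l$-th chaos, $f_N$ belongs to $\bigoplus_{n\le k+l}\mathcal{H}_n$, and this property is preserved both by the deterministic operator $(-H)^{-s/2}$ and by pointwise evaluation. Thus, for fixed $x$, the variable $(-H)^{-s/2}f_N(x)$ lies in chaoses of order $\le k+l$, so Nelson's hypercontractivity gives a constant $C_{q,k,l}$ (depending only on $q,k,l$) with $\E[|(-H)^{-s/2}(f_N-f_M)(x)|^q]\le C_{q,k,l}\big(\E[|(-H)^{-s/2}(f_N-f_M)(x)|^2]\big)^{q/2}$. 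By the definition of the $\mathcal{W}^{-s,q}$ norm and Fubini,
\[
\E\big[|f_N-f_M|_{\mathcal{W}^{-s,q}}^q\big]=\int_{\R^2}\E\big[|(-H)^{-s/2}(f_N-f_M)(x)|^q\big]\,dx\le C_{q,k,l}\int_{\R^2}\big(\E[|(-H)^{-s/2}(f_N-f_M)(x)|^2]\big)^{q/2}\,dx,
\]
so everything reduces to controlling a second moment and showing it is both uniformly bounded and summable to zero along the approximation.

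Next I would compute the second moment explicitly. Since $\mathcal{L}(Z_\infty^{\gamma_1,\gamma_2})=\mu=\mathcal{N}_{\C}(0,2(-H)^{-1})$, the fields $Z_{R,\infty}$ and $Z_{I,\infty}$ are independent real centered Gaussian fields with common covariance $(-H)^{-1}$. Setting $c_N(x,y):=\sum_{j\in\N^2}\chi^2\!\big(\lambda_j^2/\lambda_N^2\big)\lambda_j^{-2}h_j(x)h_j(y)$ (so that $c_N(x,x)=\rho_N(x)^2$), the Wick product/Isserlis pairing formula together with independence yields $\E[f_N(x)f_N(y)]=k!\,l!\,c_N(x,y)^{k+l}$, and similarly $\E[f_N(x)f_M(y)]=k!\,l!\,c_{N,M}(x,y)^{k+l}$ with the mixed covariance $c_{N,M}$ carrying the two cut-offs $\chi(\lambda_j^2/\lambda_N^2)\chi(\lambda_j^2/\lambda_M^2)$. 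Writing $\mathcal{K}_s$ for the kernel of $(-H)^{-s/2}$ (equivalently, expanding in the eigenbasis via $(-H)^{-s/2}h_j=\lambda_j^{-s}h_j$), the quantity to estimate becomes a kernel-weighted integral of $(k+l)$-th powers of these covariances,
\[
\E[|(-H)^{-s/2}f_N(x)|^2]=k!\,l!\iint \mathcal{K}_s(x,y)\,\mathcal{K}_s(x,y')\,c_N(y,y')^{k+l}\,dy\,dy',
\]
and likewise for the difference $f_N-f_M$.

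The heart of the proof, and the step I expect to be the main obstacle, is the harmonic-oscillator kernel analysis: establishing uniform-in-$N$ bounds and convergence for $\int_{\R^2}\big(\E[|(-H)^{-s/2}(f_N-f_M)(x)|^2]\big)^{q/2}dx$. The limiting covariance $c(y,y')=\sum_j\lambda_j^{-2}h_j(y)h_j(y')$ has a short-distance singularity of the same type as the $2d$ Green's function of $-\Delta$, namely $\sim-\tfrac1{2\pi}\log|y-y'|$ near the diagonal, while the confining potential $|x|^2$ forces Gaussian decay at spatial infinity; its $(k+l)$-th power therefore keeps only an integrable logarithmic singularity, and the smoothing by $(-H)^{-s/2}$ with $s>0$ upgrades regularity exactly enough that the outer $x$-integral converges precisely under $sq>2$ in dimension two. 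On the torus the corresponding bound is a routine lattice-sum/Fourier-multiplier estimate; here one must instead control products of Hermite functions simultaneously in the local (logarithmic) regime and in the large-$|x|$ regime, over the \emph{unbounded} domain $\R^2$, while the diagonal weight $\rho_N(x)^2=c_N(x,x)$ itself grows logarithmically in $N$. This interplay between the growing renormalization constant and the non-compact spatial integration is the genuinely new difficulty relative to the compact case, and it is exactly where the uniform $L^p\to L^p$ bounds for $S_N$ (via the modification of \cite{jn}) and sharp pointwise/$L^p$ estimates for Hermite functions enter.

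Finally, I would assemble the pieces. The uniform-in-$N$ version of the kernel estimate gives $\int_{\R^2}(\E[|(-H)^{-s/2}f_N(x)|^2])^{q/2}dx\le M'_{s,q,k,l}$ independently of $N$, hence $\E[|f_N|_{\mathcal{W}^{-s,q}}^q]\le M_{s,q,k,l}$. For the Cauchy property, the differences of covariances involve factors $\chi(\lambda_j^2/\lambda_N^2)-\chi(\lambda_j^2/\lambda_M^2)$ which vanish as $N,M\to\infty$; combined with the domination furnished by the uniform estimate, dominated convergence makes $\E[|f_N-f_M|_{\mathcal{W}^{-s,q}}^q]\to0$, so $\{f_N\}$ is Cauchy in $L^q(\Omega,\mathcal{W}^{-s,q})$. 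Completeness of that space produces the limit defining $:(Z_{R,\infty})^k(Z_{I,\infty})^l:$, and since $f_N\to{:(Z_{R,\infty})^k(Z_{I,\infty})^l:}$ in $L^q(\Omega,\mathcal{W}^{-s,q})$ while $\E[|f_N|_{\mathcal{W}^{-s,q}}^q]\le M_{s,q,k,l}$ uniformly, passing to the limit (by Fatou, or directly by $L^q$-convergence of the norms) yields the bound \eqref{bound:Z}.
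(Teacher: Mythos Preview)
The paper does not actually prove this proposition; it is quoted from \cite{dbdf1} (note the citation in the statement header), so there is no in-paper proof to compare against. Your outline is the standard and correct strategy for such results: reduce to second moments via hypercontractivity (since $f_N$ lives in a fixed finite chaos), express those second moments through the covariance kernel $c_N$ of the smoothed field, and then control the resulting integrals using the spectral properties of $-H$. The independence of $Z_{R,\infty}$ and $Z_{I,\infty}$ and the formula $\E[f_N(x)f_N(y)]=k!\,l!\,c_N(x,y)^{k+l}$ are correct, and the final assembly via dominated convergence and Fatou is fine.

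That said, what you have written is a plan rather than a proof: the step you yourself flag as ``the heart of the proof'' --- the uniform-in-$N$ bound and the Cauchy estimate for $\int_{\R^2}\big(\E[|(-H)^{-s/2}(f_N-f_M)(x)|^2]\big)^{q/2}dx$ --- is described qualitatively but not carried out. In the torus case this is indeed routine; on $\R^2$ with the harmonic potential it requires genuine work on Hermite-function asymptotics and on the kernel of $(-H)^{-s/2}$, precisely to handle simultaneously the logarithmic diagonal singularity and the integrability at infinity under the sharp condition $sq>2$. This is exactly the content of the proof in \cite{dbdf1}, and until those estimates are actually established your argument has a gap at its most technical point. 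Everything else in your write-up is sound.
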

\vspace{3mm}

Remark that higher order moments may also be estimated thanks to Nelson formula:
Let $s>0$, $m > q>2$ and $sq>2$.  Then there is a constant $M_{s,q,k,l,m}$ such that
\begin{equation} \label{eq:m-moment} 
\E\left[|:(Z_{R,\infty})^k (Z_{I,\infty})^l:|_{\mathcal{W}^{-s,q}}^m\right] \le M_{s,q,k,l,m}.
\end{equation}
\vspace{3mm} 

Note that if we consider the equation (\ref{eq:wicku}) in terms of $X$, then 
\begin{equation} \label{eq:wickX} 
dX = (\gamma_1 + i\gamma_2)(HX -:|X|^{2} X:)dt +\sqrt{2\gamma_1}dW, \quad t>0, \quad x \in \R^2.
\end{equation}
\vspace{3mm}

In \cite{dbdf1}, we constructed a measure $\rho$ as a weak limit of the family of  finite dimensional Gibbs measure of the form : 
$$d \tilde{\rho}_N (y)= \Gamma_N e^{-\tilde \cH_{N}(S_N y)} dy, \quad y \in E_N^{\C},$$
where $\Gamma_N^{-1}= \int e^{-\tilde \cH_{N}(S_N y)} dy,$ and  
$$\tilde \cH_{N}(y)=\frac{1}{2}|\nabla y|_{L^2}^2 +\frac{1}{2}|x y|_{L^2}^2 + \int_{\R^2}\left[\frac{1}{4}|y(x)|^4 -2\rho_N^2(x)|y(x)|^2+2 \rho_N^4(x)\right]dx.$$
Note that
$$\tilde \cH_{N}(y)=\frac{1}{2}|\nabla y|_{L^2}^2 +\frac{1}{2}|x y|_{L^2}^2 + \frac 14 \int_{\R^2} :|y(x)|^4: dx, $$
and $\nabla_y \tilde \cH_N(y)=-Hy+:|y|^2y:$. 
\vspace{3mm}

\begin{prop} (\cite{dbdf1})
\label{cor:tightness}
The family of finite dimensional Gibbs measures $(\tilde{\rho}_N)_N$ is tight in $\mathcal{W}^{-s,q}$ for any $s>0$, $q \ge 2$ and $sq>2$.  
\end{prop}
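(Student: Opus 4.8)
The plan is to deduce tightness from Prokhorov's theorem, so that it suffices to produce, for every $\eps>0$, a compact set $K_\eps\subset\mathcal{W}^{-s,q}$ with $\tilde\rho_N(K_\eps)\ge 1-\eps$ uniformly in $N$. The structural fact that makes this possible here, and which distinguishes the present situation from the torus, is furnished by the confining potential $|x|^2$: since $-H$ has compact resolvent, the embedding $\mathcal{W}^{-s',q}(\R^2)\hookrightarrow\mathcal{W}^{-s,q}(\R^2)$ is \emph{compact} whenever $s'<s$. Given $s,q$ with $s>0$, $q\ge 2$, $sq>2$, I would fix $s'$ with $2/q<s'<s$ (possible because the inequality $sq>2$ is strict, and this forces $s'>0$ and $s'q>2$), and take as compact sets the balls $K_R=\{v:|v|_{\mathcal{W}^{-s',q}}\le R\}$. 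Markov's inequality then gives $\tilde\rho_N(K_R^c)\le R^{-q}\int|v|_{\mathcal{W}^{-s',q}}^q\,d\tilde\rho_N(v)$, so everything reduces to the uniform moment bound
\[
\sup_N\int|v|_{\mathcal{W}^{-s',q}}^q\,d\tilde\rho_N(v)<\infty.
\]

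To prove this bound I would disintegrate $\tilde\rho_N$ over the underlying Gaussian. Writing $V_N(v)=\tfrac14\int_{\R^2}:|S_Nv|^4:\,dx$ for the Wick-renormalized interaction and $\mu_N$ for the Gaussian measure carrying the quadratic part $\tfrac12(|\nabla\cdot|^2+|x\cdot|^2)$ of $\tilde\cH_N$ — equivalently, the measure with respect to which the Wick powers $:(S_N\cdot)^k:$ are centered, so that $\mu_N$ is comparable to the law $\mu=\mathcal{N}_\C(0,2(-H)^{-1})$ of $Z_\infty^{\gamma_1,\gamma_2}$ — one has $d\tilde\rho_N=\Gamma_N e^{-V_N}\,d\mu_N$. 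Two facts then combine. First, the Gaussian moments $\int|v|_{\mathcal{W}^{-s',q}}^{2q}\,d\mu_N$ are bounded uniformly in $N$: this follows from the Kolmogorov-type estimate of Lemma \ref{lem:Kolmogorov} (computed with $\mu_N$ in place of $\mu$, at a fixed time, using stationarity) together with the uniform $L^q$-boundedness of $S_N$. Second, the partition function is controlled from below for free: since each Wick monomial has zero $\mu_N$-mean, $\int V_N\,d\mu_N=0$, and Jensen's inequality yields $\int e^{-V_N}\,d\mu_N\ge e^{-\int V_N\,d\mu_N}=1$, i.e. $\Gamma_N\le 1$. Hence, by the Cauchy--Schwarz inequality,
\[
\int|v|_{\mathcal{W}^{-s',q}}^q\,d\tilde\rho_N\le\int|v|_{\mathcal{W}^{-s',q}}^q e^{-V_N}\,d\mu_N\le\Big(\int|v|_{\mathcal{W}^{-s',q}}^{2q}\,d\mu_N\Big)^{1/2}\Big(\int e^{-2V_N}\,d\mu_N\Big)^{1/2},
\]
and the first factor is uniformly bounded by what precedes.

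The main obstacle is the second factor, namely the uniform exponential integrability
\[
\sup_N\int e^{-2V_N}\,d\mu_N<\infty.
\]
This is genuinely delicate rather than a consequence of positivity: the Wick-renormalized quartic $:|S_Nv|^4:$ is \emph{not} bounded below, so one is squarely in the regime of a Nelson-type estimate. I would establish it through the now-standard combination of a lower bound of the form $V_N(v)\ge -C\big(1+\Theta_N(v)^{\kappa}\big)$, where $\Theta_N$ gathers the $\mathcal{W}^{-s,q}$-norms of the finite-$N$ Wick data $:(S_NZ_{R,\infty})^k(S_NZ_{I,\infty})^l:$ whose moments are controlled uniformly in $N$ by \eqref{bound:Z}--\eqref{eq:m-moment}, followed by Nelson's hypercontractivity argument (or, equivalently, a Boué--Dupuis variational representation of the exponential) to convert the polynomial growth in $\Theta_N$ into a finite exponential moment uniform in $N$.

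In summary, the compact embedding supplied by the harmonic potential and the Jensen lower bound $\Gamma_N\le1$ are soft, and once the uniform Gaussian moments and the uniform Nelson bound are in hand the uniform $\mathcal{W}^{-s',q}$-moment estimate follows, whence tightness in $\mathcal{W}^{-s,q}$. I expect the Nelson estimate above — together with making precise the comparison of $\mu_N$ with the fixed Gaussian $\mu$ on the finite-dimensional spaces $E_N^{\C}$ — to be the technically demanding part of the argument.
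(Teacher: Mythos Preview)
Your outline is a legitimate route to tightness --- it is the classical static/constructive-QFT argument (Jensen for the partition function, Cauchy--Schwarz against the Gaussian, then a Nelson-type exponential bound for $:|\cdot|^4:$) --- but it is \emph{not} the approach the paper takes. The paper proves tightness \emph{dynamically}: it introduces the coupled system \eqref{eq:coupled} for $(u,Z)$ with $u+Z=X$, obtains an invariant measure $\nu_N$ by Krylov--Bogolyubov, and identifies $\tilde\rho_N$ as the push-forward of $\nu_N$ under $(u,z)\mapsto u+z$. The uniform moment bound then comes from the energy estimate of Proposition~\ref{prop:tightness}, obtained by testing the $u$-equation against $u_N$ and controlling the cross-terms $F_1,F_2,F_3$ via the product estimate \eqref{taylor}; this yields $\E|(-H)^{1/2}u_N|_{L^2}^2\le C$ uniformly in $N$, and the compact embedding $\mathcal{W}^{1,2}\hookrightarrow\mathcal{W}^{-s,q}$ (plus the Gaussian bound on $Z_N$) finishes the job, as spelled out in Remark~(1).

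The trade-offs are clear. The paper's dynamic argument sidesteps the Nelson estimate entirely --- which matters here because the $\Phi^4_2$ Nelson bound is standard on the torus but would need to be redone in the harmonic-oscillator Sobolev scale on $\R^2$ --- and it makes the dependence of the constants on $\gamma_1,\gamma_2$ explicit, which the authors care about. Your static argument is shorter in spirit and does not require setting up the auxiliary stochastic dynamics, but it leaves the genuinely hard step (the uniform bound on $\int e^{-2V_N}d\mu_N$ in this setting) as a black box; you would also have to be a bit careful with the fact that the density of $\tilde\rho_N$ is written as $e^{-\tilde\cH_N(S_N y)}$ rather than $e^{-\tilde\cH_N(y)}$, so the quadratic part involves $S_N$ and the identification of the reference Gaussian $\mu_N$ on $E_N^\C$ is not entirely trivial.
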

\vspace{3mm}
 
 Note that $\tilde{\rho}_N$ does not depend on $\gamma_1$ or $\gamma_2$. It is an invariant measure for  the case of $\gamma_1=0$ and $\gamma_2\ne 0$ or 
for the case of  $\gamma_1>0$ and $\gamma_2 =0$, for the case of $\gamma_1>0$ and $\gamma_2 \ne 0$ to the finite dimensional equation:
 \begin{equation} \label{eq:SGP_finite}
dX=(\gamma_1 + i\gamma_2)(HX -S_N (: |S_N X|^{2} S_N X :)) dt +\sqrt{2\gamma_1} \Pi_N dW, \quad t>0, \quad x \in \R^2, \quad X(0) \in E_N^{\C}.
\end{equation}
The global existence of the solution $X_N$ of this finite dimensional equation is ensured by the standard fixed point methods, and an energy estimate 
as in \cite{dbdf0} (see also (\ref{bound:F}) below). Remark that the invariant measure $\tilde{\rho}_N$ is unique if  $\gamma_1>0$ and $\gamma_2=0$. 
Eq.(\ref{eq:SGP_finite}) gives the Galerkin approximation of $X$ satisfying  (\ref{eq:wickX}). 
\vspace{3mm}  
                                              
Finally we deduce the following main result for the equation \eqref{eq:wickX}:
\begin{thm} \label{thm:main} 
Let $\gamma_1>0$ and $\gamma_2 \in \R$, and 
let $0<s<1$, $q>2$ such that $sq >2$. Then there exists a stationary martingale solution $X$ of  \eqref{eq:wickX}  having trajectories 
in $C(\R_+, \mathcal{W}^{-s,q})$, and $\mathcal{L}(X(t)) = \rho$ for all $t \in \R$.
\end{thm}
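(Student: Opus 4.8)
The plan is to obtain $X$ as a subsequential limit, in law, of the stationary Galerkin processes, and then to identify this limit as a solution of \eqref{eq:wickX}. First I would fix $N$ and let $X_N$ be the stationary solution of the finite-dimensional equation \eqref{eq:SGP_finite} issued from its invariant measure $\tilde\rho_N$, so that $\mathcal{L}(X_N(t))=\tilde\rho_N$ for every $t$ and the process extends to a stationary process indexed by $t\in\R$. The key advantage of starting at the invariant measure is that every control on $\mathcal{L}(X_N(t))$ is then uniform in $N$ and in $t$ and follows directly from Proposition \ref{cor:tightness} and the moment bounds \eqref{bound:Z}--\eqref{eq:m-moment}; in particular no pathwise globalization of the solution (such as the $L^p$ energy estimate of \cite{tw}) is required, which is precisely what makes the argument work for every $\gamma_1>0$ rather than only for $\gamma_1$ large.

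Next I would prove tightness of the laws of $\{X_N\}$ on $C([0,T],\mathcal{W}^{-s,q})$ for each fixed $T$. Choose $0<s'<s$ with $s'q>2$; since $H$ has compact resolvent, the embedding $\mathcal{W}^{-s',q}\hookrightarrow\mathcal{W}^{-s,q}$ is compact, and stationarity together with \eqref{bound:Z}--\eqref{eq:m-moment} and Lemma \ref{lem:Kolmogorov} gives a uniform bound $\E[\sup_{t\le T}|X_N(t)|_{\mathcal{W}^{-s',q}}]\le C_T$. For equicontinuity in time I would write $X_N=Z_N+u_N$, where $Z_N$ is the stationary stochastic convolution of the truncated linear equation, whose time-Hölder regularity is controlled as in Lemma \ref{lem:Kolmogorov}, and where $u_N$ solves the random equation whose nonlinearity is the truncation of the decomposition \eqref{def:F}. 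Using the mild formulation, the smoothing of $e^{t(\gamma_1+i\gamma_2)H}$, and the uniform Wick bounds of Proposition \ref{prop:ReImCauchy}, I would derive a uniform-in-$N$ fractional-in-time bound for $X_N$ valued in a slightly weaker space, and combine it with the $\mathcal{W}^{-s',q}$ bound through an Aubin--Lions--Simon type compactness lemma to obtain tightness in $C([0,T],\mathcal{W}^{-s,q})$. A diagonal extraction over $T\in\N$ then yields tightness on $C(\R_+,\mathcal{W}^{-s,q})$.

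I would then apply Prokhorov's theorem to extract a weakly convergent subsequence, and the Skorokhod representation theorem to realize, on a new probability space, a sequence $\tilde X_N\to X$ almost surely in $C(\R_+,\mathcal{W}^{-s,q})$ together with the associated driving noises. Passing to the limit in the mild formulation of \eqref{eq:SGP_finite} then identifies $X$ as a martingale solution of \eqref{eq:wickX}: the linear terms pass by continuity and the uniform $L^p$-boundedness of $S_N$, while the renormalized cubic term is treated through the splitting $F_0+F_1+F_2+F_3$ of \eqref{def:F}. The purely Gaussian term $F_3$ converges by Proposition \ref{prop:ReImCauchy}, and the mixed terms in $F_1$ and $F_2$ are handled by coupling the convergence of the Wick powers $:(Z_{R,\infty})^k(Z_{I,\infty})^l:$ with the convergence of $u_N$ through a deterministic multiplication (paraproduct) estimate. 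Finally, the stationarity of $X$ and the identity $\mathcal{L}(X(t))=\rho$ follow from the stationarity of each $X_N$ and the weak convergence $\tilde\rho_N\to\rho$ established in \cite{dbdf1}.

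The main obstacle is the limit passage in the renormalized nonlinearity. Because $X$ only lives in a space of negative regularity, the cubic term is classically ill-defined, and the argument hinges on controlling, uniformly in $N$, both the negative-regularity Wick powers of the Gaussian part and a strictly positive amount of regularity for the remainder $u_N$, so that all the products in $F_1$ and $F_2$ are admissible and stable under the limit. Extracting this uniform positive-order bound on $u_N$ from the mild equation --- with constants independent of $N$ and, as the report stresses, with explicit dependence on $\gamma_1$ --- is the technical heart of the proof.
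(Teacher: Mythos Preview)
Your overall architecture --- stationary Galerkin approximations, the splitting $X_N=u_N+Z_N$, tightness, Prokhorov/Skorokhod, and passage to the limit term by term in the decomposition $F_0+\dots+F_3$ --- matches the paper's. The divergence is in how and where the crucial positive regularity of $u_N$ is obtained, and this is not a cosmetic difference.

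You propose to get a uniform bound $\E[\sup_{t\le T}|X_N(t)|_{\mathcal{W}^{-s',q}}]\le C_T$ from stationarity, \eqref{bound:Z}--\eqref{eq:m-moment} and Lemma~\ref{lem:Kolmogorov}, and then to extract positive regularity of $u_N$ \emph{from the mild equation} via the smoothing of $e^{t(\gamma_1+i\gamma_2)H}$. Neither step is justified as stated. Stationarity only gives fixed-time moments, and Lemma~\ref{lem:Kolmogorov} concerns $Z_\infty$, not $X_N$; a sup-in-time bound on $X_N$ would itself require a time-increment estimate, which you have not yet produced. More importantly, a mild-equation bootstrap for $u_N$ requires controlling the nonlinear terms $F_0,F_1,F_2$ in a negative Sobolev norm uniformly in $N$, which already presupposes some positive regularity of $u_N$; closing this loop purely by smoothing is exactly the route that the report says fails for small $\gamma_1$ (cf.\ the discussion after Proposition~\ref{cor:tightness} and the first paragraph of the introduction).

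The paper instead relies on Proposition~\ref{prop:tightness}, proved by an $L^2$ energy estimate on the \emph{stationary} coupled system \eqref{eq:coupled}: taking the inner product with $u_N$, estimating $F_1,F_2,F_3$ via duality and interpolation, and using stationarity to kill the time derivative yields $\E(|(-H)^{1/2m}u_N|_{L^2}^{2m})\le C_{m,\gamma_1,\gamma_2}$ uniformly in $N$. This is the input you are missing. With it, the paper does not prove tightness of $X_N$ in $C([0,T],\mathcal{W}^{-s,q})$ at all; it proves tightness of the pair $(u_N,Z)$ in
\[
\bigl(L^3(0,T,L^4)\cap L^2(0,T,\mathcal{W}^{5/6,2})\cap C([0,T],\mathcal{W}^{-3,p})\bigr)\times C^\beta([0,T],\mathcal{W}^{-s',q}\cap\mathcal{W}^{-\delta,p}),
\]
the $L^p_t$-in-time positive-regularity spaces for $u_N$ coming from Proposition~\ref{prop:tightness} and a $\mathcal{W}^{1,4/3}_t(\mathcal{W}^{-2,p})$ bound on $\partial_t u_N$ (Lemma~\ref{lem:uz_tendu}), combined via Aubin--Lions (Remark~\ref{rem:compactness}). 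The limit passage in $F_0,\dots,F_3$ then uses Lemma~\ref{lem:bilinear} together with the $L^2(0,T,\mathcal{W}^{1/3,4})$ convergence of $S_N\tilde u_N$. The final $C(\R_+,\mathcal{W}^{-s,q})$ regularity of $\tilde X=\tilde u+\tilde Z$ is recovered only at the end. Your outline would become correct once you replace the mild-equation bootstrap by Proposition~\ref{prop:tightness} and reformulate the tightness at the level of $(u_N,Z)$ in these mixed spaces.
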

The proof of Theorem \ref{thm:main} will be given in Section 3.
\vspace{3mm}

The tightness of the family of measures $(\tilde{\rho}_N)_N$ in Proposition \ref{cor:tightness} was proved in \cite{dbdf1} 
considering the coupled evolution on $E_N^\C$ given by
\begin{equation}
\label{eq:coupled}
\left\{ \begin{array}{rcl} \ds
\frac{du}{dt}& =&(\gamma_1+i\gamma_2)\left[ Hu-S_N\big(:|S_N(u+Z)|^2 S_N (u+Z):\big)\right] \\[0.25cm]
dZ &= & (\gamma_1+i\gamma_2) HZ dt +\sqrt{2\gamma_1}\Pi_N dW,
\end{array} \right.
\end{equation}
one may easily prove, using e.g. similar estimates as in the proof of Proposition \ref{prop:tightness} below, together with the Gaussianity
of $Z$, and a Krylov-Bogolyubov argument, that \eqref{eq:coupled} has an invariant measure $\nu_N$ on $E_N^\C \times \E_N^\C$.
Moreover, by uniqueness of the invariant measure of \eqref{eq:SGP_finite} in case of $\gamma_2=0$, 
we necessarily have for any bounded continuous
function $\varphi$ on $\E_N^\C$ :
$$
\int_{E_N^\C} \varphi(x)\tilde \rho_N(dx)=\int \!\!\int_{E_N^\C\times E_N^\C} \varphi(u+z) \nu_N(du,dz).
$$

\begin{prop}
\label{prop:tightness} Let $\gamma_1>0$ and $\gamma_2 \in \R$. 
Let $(u_N,Z_N) \in C(\R_+;E_N^\C \times E_N^\C)$ be a stationary solution of \eqref{eq:coupled}. Then, for any $m > 0$, there is a constant $C_{m, \gamma_1, \gamma_2}>0$ 
independent of $t$ and $N$, such that
\begin{equation}
\label{eq:h1bound}
\E(|(-H)^{\frac{1}{2m}}u_N|_{L^2}^{2m}) \le C_{m, \gamma_1, \gamma_2}.
\end{equation}
\end{prop}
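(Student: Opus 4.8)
The plan is to run a weighted energy estimate for the functional $V(u):=|(-H)^{1/(2m)}u|_{L^2}^2$ and to exploit stationarity. In \eqref{eq:coupled} the component $u_N$ carries no noise of its own (the Brownian forcing enters only through $Z_N$), so the trajectory $t\mapsto u_N(t)$ is pathwise $C^1$ in $E_N^\C$, no Itô correction appears, and I may differentiate $V^m$ by the ordinary chain rule:
$$\frac{d}{dt}V(u_N)^m = 2m\,V(u_N)^{m-1}\,\re\big\langle (-H)^{1/m}u_N,\ \dot u_N\big\rangle.$$
Inserting the $u_N$-equation and using that $(-H)^{1/m}$ commutes with $H$, the linear part yields the dissipation $-2m\gamma_1\,V(u_N)^{m-1}|(-H)^{1/(2m)+1/2}u_N|_{L^2}^2$, which is coercive: since $\lambda_k^2=2|k|+2\ge 2$ we have $-H\ge 2$, hence $|(-H)^{1/(2m)+1/2}u_N|_{L^2}^2\ge 2\,V(u_N)$, and this term dominates $4m\gamma_1 V(u_N)^m$. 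For each fixed $N$ all quantities below are integrable (finite dimension together with the Gaussian-type tails of the invariant measure $\nu_N$), so stationarity gives $\E\big(\tfrac{d}{dt}V(u_N)^m\big)=0$; the whole point is then to absorb the nonlinear contribution into the dissipation with constants independent of $N$ and $t$.

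For the nonlinear term I would expand $F=F_0+F_1+F_2+F_3$ as in \eqref{def:F} (with $S_Nu_N,S_NZ_N$ in place of $u,Z_\infty$), where $F_0$ is cubic in $u_N$, $F_1$ quadratic in $u_N$ and linear in a Wick power, $F_2$ linear in $u_N$ and quadratic in Wick powers, and $F_3$ a pure Wick cubic. The main cubic term $F_0$ is treated by writing $(-H)^{1/m}=(-H)^{1/(2m)}\circ(-H)^{1/(2m)}$ and applying a fractional Leibniz rule for $(-H)^{1/(2m)}$: the leading piece is a nonnegative multiple of $V(u_N)^{m-1}\int_{\R^2}|S_Nu_N|^2\,|(-H)^{1/(2m)}S_Nu_N|^2\,dx$, which enters with the good (dissipative) sign coming from $\re(\gamma_1+i\gamma_2)=\gamma_1>0$, while the Leibniz remainder is small because the order $1/(2m)$ of the fractional operator is small for large $m$, and can be absorbed into the two dissipative quantities.

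For $F_1,F_2,F_3$ I would pair $(-H)^{1/m}u_N$ against the Wick factors by duality between $\mathcal{W}^{s,q'}$ and $\mathcal{W}^{-s,q}$ (with $1/q+1/q'=1$), using the $N$-uniform $L^p\to L^p$ bound for $S_N$, Hölder and Sobolev embeddings to place $u_N$ into $\mathcal{W}^{s,q'}$, and the moment bounds \eqref{bound:Z}, \eqref{eq:m-moment} for the renormalized products $:(Z_{R,\infty})^k(Z_{I,\infty})^l:$; the regularity required on $u_N$ is supplied, for $m$ large, by the half-derivative gain in $|(-H)^{1/(2m)+1/2}u_N|_{L^2}^2$. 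A Young inequality then trades each such term for a small multiple of the dissipative quantities plus a constant times a finite moment of the Wick norms: $F_1$ and the $F_0$-remainder are absorbed into $V(u_N)^{m-1}\!\int|S_Nu_N|^2|(-H)^{1/(2m)}S_Nu_N|^2$ and $V(u_N)^{m-1}|(-H)^{1/(2m)+1/2}u_N|_{L^2}^2$, $F_2$ into $V(u_N)^m$, and $F_3$ into $V(u_N)^m$ plus a constant (using $\E[|\!:(Z_{R,\infty})^k(Z_{I,\infty})^l:\!|_{\mathcal{W}^{-s,q}}^{2m}]<\infty$). Collecting everything and using $V(u_N)^{m-1}|(-H)^{1/(2m)+1/2}u_N|_{L^2}^2\ge 2\,V(u_N)^m$ gives $\gamma_1\,\E(V(u_N)^m)\le C_{m,\gamma_1,\gamma_2}$, i.e. \eqref{eq:h1bound}.

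The step I expect to be the main obstacle is the $N$-uniform control of the mixed and pure Wick terms $F_1$–$F_3$: one must balance the negative regularity of the renormalized products against the regularity of $u_N$ extracted from the dissipation, and verify that the exponents $(s,q)$ together with the fractional-Leibniz remainders can all be closed with constants independent of $N$ — which is precisely where the smallness of $1/(2m)$ and the $N$-uniform boundedness of $S_N$ on $L^p$ are essential.
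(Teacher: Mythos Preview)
Your approach differs from the paper's at a crucial structural point, and the difference creates a genuine gap when $\gamma_2\neq 0$.

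You test the $u_N$--equation against $(-H)^{1/m}u_N$. For the cubic term $F_0=|S_Nu_N|^2S_Nu_N$ you then need the sign of
\[
-\re\Big((\gamma_1+i\gamma_2)\big\langle (-H)^{1/m}S_Nu_N,\ |S_Nu_N|^2S_Nu_N\big\rangle\Big)
=-\gamma_1\,\re\langle\cdots\rangle+\gamma_2\,\im\langle\cdots\rangle.
\]
The quantity $\langle (-H)^{1/m}v,\,|v|^2v\rangle$ is \emph{not} real once a nontrivial fractional power is inserted: already for $(-H)$ itself one computes $\im\int(-\Delta v)\,|v|^2\bar v\,dx=\im\int \bar v^{\,2}(\nabla v)^2\,dx$, which has no sign. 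A fractional Leibniz expansion does not cure this: even if the ``leading piece'' $\int|v|^2|(-H)^{1/(2m)}v|^2$ of the real part is nonnegative, the $\gamma_2\,\im\langle\cdots\rangle$ contribution is of the same order and cannot be absorbed by the dissipative terms unless $|\gamma_2|/\gamma_1$ is small. So the claim that the good sign ``comes from $\re(\gamma_1+i\gamma_2)=\gamma_1$'' is exactly what fails: the whole point of the dispersive coefficient is that it contributes through the imaginary part of such pairings. (This is consistent with the paper's remark that an $L^p$--type energy estimate globalizes only for $\gamma_1$ large.) A secondary issue is that your remainder heuristic ``small because $1/(2m)$ is small'' only addresses large $m$, whereas the statement is for every $m>0$.

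The paper sidesteps both problems by testing against $u_N$ itself. Then $\langle S_Nu_N,|S_Nu_N|^2S_Nu_N\rangle=|S_Nu_N|_{L^4}^4$ is real, the $\gamma_2$ part disappears, and $F_0$ contributes the clean dissipative term $+\gamma_1|S_Nu_N|_{L^4}^4$ on the left. The remaining $F_1,F_2,F_3$ are handled, much as you sketch, by $\mathcal W^{-s,q}$--$\mathcal W^{s,q'}$ duality, interpolation between $L^r$ and $\mathcal W^{1,2}$, and Young's inequality, yielding
\[
\frac{d}{dt}|u_N|_{L^2}^2+\frac{\gamma_1}{2}|(-H)^{1/2}u_N|_{L^2}^2\ \lesssim\ \text{(moments of Wick products)}.
\]
Only \emph{after} this do they pass to general $m$: multiply by $|u_N|_{L^2}^{2m-2}$ and use the interpolation
\[
|(-H)^{1/(2m)}u_N|_{L^2}\le C_m\,|(-H)^{1/2}u_N|_{L^2}^{1/m}\,|u_N|_{L^2}^{(m-1)/m},
\]
together with Poincar\'e and stationarity, to obtain \eqref{eq:h1bound}. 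If you want to salvage your scheme, the fix is precisely this: do the plain $L^2$ energy identity first (so $F_0$ is exactly real), and upgrade to $(-H)^{1/(2m)}$ a posteriori via the interpolation above rather than by pairing with $(-H)^{1/m}u_N$.
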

\vspace{3mm}

\begin{rem}
\begin{itemize}
\item[(1)] The constant $C_{m,\gamma_1, \gamma_2}$ in the RHS does not depend on $\gamma_1$ when $\gamma_2=0$, and if $\gamma_2 \ne 0$, then $C_{m,\gamma_1, \gamma_2}$ depends on the ratio $\frac{|\gamma_2|}{\gamma_1}$. Let $s$ with $0<s<1$ and $q>2$ such that $sq>2$. Using this remark, applying Proposition \ref{prop:tightness} with $m=1$ and $\gamma_2=0$, we deduce that for some positive constant $C$ not depending on $N, \gamma_1$, and for any $t\ge 0$,
$$
\E \big( |u_N(t)|_{\mathcal{W}^{-s,q}}^2 \big) \lesssim \E \big(|u_N(t)|_{L^q}^2\big) \lesssim \E\big(|(-H)^{\frac12}u_N|_{L^2}^2\big) \lesssim C,
$$
where we have used the embedding $\mathcal{W}^{1,2} \subset L^q$, for any $q<+\infty$. Thus,
\begin{eqnarray*}
& & \int_{\mathcal{W}^{-s,q}} |x|^2_{\mathcal{W}^{-s,q}} \tilde \rho_N(dx) =\int\!\! \!\int_{(\mathcal{W}^{-s,q})^2} |u+z|_{\mathcal{W}^{-s,q}}^2 \nu_N(du,dz) \\
& & \le 2\E \big( |u_N(t)|_{\mathcal{W}^{-s,q}}^2 +|Z_N(t)|_{\mathcal{W}^{-s,q}}^2 \big),
\end{eqnarray*}
and the right hand side above is bounded indepently of $N$, $\gamma_1$ and $t$, since the law of $Z_N$ converges 
to a Gaussian measure $\mu$ (which is independent of $\gamma_1$) on ${\mathcal{W}^{-s,q}}$. 
We see thus that the tightness is independent of the parameters $\gamma_1, \gamma_2$, which is consistent with the fact that 
$\tilde{\rho}_N$ is independent of $\gamma_1$ and $\gamma_2.$
\item[(2)] Unfortunately, the bound (\ref{eq:h1bound}) does not provide higher moment bounds on the measures $\tilde \rho_N$, 
 preventing us to obtain  $\rho$-a.s. initial data global existence of solution by the method  in \cite{dpd1}.
\end{itemize}
\end{rem} 
    
\vspace{3mm}

For the proof of Proposition \ref{prop:tightness}, we will use the following interpolation estimates:
Let $\alpha\ge 0$.
\begin{eqnarray} \label{taylor}
|fg|_{\mathcal{W}^{\alpha,q}} \le C(|f|_{L^{q_1}} |g|_{\mathcal{W}^{\alpha, \bar{q_1}}} + |f|_{\mathcal{W}^{\alpha, q_2}} |g|_{L^{\bar{q_2}}}),
\end{eqnarray}
where $1<q<\infty$,  $q_1, q_2 \in (1, \infty]$, $\bar{q_1}, \bar{q_2} \in [1, \infty)$ with 
$\frac{1}{q}=\frac{1}{q_1}+\frac{1}{\bar{q_1}}=\frac{1}{q_2}+\frac{1}{\bar{q_2}}.$
\vspace{3mm}

\noindent
{\em Proof of Proposition \ref{prop:tightness}.}
Taking the $L^2$-inner product of the first equation in \eqref{eq:coupled} with $u_N$ yields
\begin{eqnarray}
\label{bound:F} 
\ds \nonumber& & 
\frac12 \frac{d}{dt} |u_N(t)|_{L^2}^2 +\gamma_1 |(-H)^{\frac12} u_N(t)|_{L^2}^2 +\gamma_1 |S_Nu_N(t)|_{L^4}^4  =  
-\mathrm{Re}(\gamma_1+i\gamma_2) \int_{\R^2} \left[ F_1(S_Nu_N, S_NZ_N)\right.\\
& & \hskip 1 in \left. + \,F_2(S_Nu_N, S_NZ_N)+F_3(S_NZ_N)\right]
\overline{S_Nu_N}(t) dx.
\end{eqnarray}

We first estimate the term containing $F_3$ in the right hand side above. Thanks to Proposition~\ref{prop:ReImCauchy},
taking $0<s<1$ and $q>2$ such that $sq>2$, we may bound
$$
\left| \int_{\R^2} F_3(S_NZ_N)\overline{S_Nu_N} dx \right| \lesssim |F_3(S_NZ_N)|_{\mathcal{W}^{-s,q}} |S_Nu_N|_{\mathcal{W}^{s,q'}}
$$
with $\frac1q +\frac{1}{q'} =1$. Interpolating then $\mathcal{W}^{s,q'}$between $L^r$ and $\mathcal{W}^{1,2}$, with $\frac{1}{q'}=\frac s2 +\frac{1-s}{r}$,
we get
$$
|S_Nu_N|_{\mathcal{W}^{s,q'}} \lesssim |(-H)^{\frac12} S_Nu_N|_{L^2}^s |S_Nu_N|_{L^r}^{1-s}.
$$
On the other hand, noticing that $r\in(1,2)$, we have for any $v\in \mathcal{W}^{1,2}$:
\begin{eqnarray*}
\int_{|x|\ge 1} |v(x)|^r dx & \le & \left[\int_{|x|\ge 1}|x|^2 |v(x)|^2 dx\right]^{\frac r2} \left[ \int_{|x|\ge 1} |x|^{-\frac{2r}{2-r}} dx\right]^{\frac{2-r}{2r}}\\
& \lesssim &|(-H)^{\frac12 } v|_{L^2}^r,
\end{eqnarray*}
so that $|v|_{L^r}\lesssim |(-H)^{\frac12 } v|_{L^2}$. It follows that
\begin{eqnarray}
\label{bound:F3}
\nonumber 
\left| \int_{\R^2} F_3(S_NZ_N) \overline{S_Nu_N}dx\right| 
&  \lesssim &|F_3(S_NZ_N)|_{\mathcal{W}^{-s,q}} |(-H)^{\frac12} S_Nu_N|_{L^2}\\ \nonumber 
&  \le & \frac{4(\gamma_1+|\gamma_2|)}{\gamma_1} |F_3(S_NZ_N)|_{\mathcal{W}^{-s,q}}^2 \\ 
&& \hspace{10mm}                               +\frac{\gamma_1}{4(\gamma_1+|\gamma_2|)} |(-H)^{\frac12} S_Nu_N|_{L^2}^2.
\end{eqnarray}

Next, we consider the term containing $F_2$ in the right hand side of \eqref{bound:F}. First, by \eqref{def:F} and \eqref{taylor},
\begin{eqnarray*}
 \left| \int_{\R^2} F_2(S_Nu_N, S_NZ_N) \overline{S_Nu_N}dx\right|
&  \lesssim &\sum_{l=0}^2 |:S_NZ_{N,R}^l S_NZ_{N,I}^{2-l} :|_{\mathcal{W}^{-s,q}} |(S_N u_N)^2|_{\mathcal{W}^{s,q'}} \\
&  \lesssim &\sum_{l=0}^2 |:S_NZ_{N,R}^l Z_{N,I}^{2-l} :|_{\mathcal{W}^{-s,q}} |S_Nu_N|_{L^4} |S_Nu_N|_{\mathcal{W}^{s,p}}
\end{eqnarray*}
with $\frac{1}{q'}=\frac14 + \frac1p$. Note that $p\in(1,2)$ and we may use the same procedure as before to obtain
$$
|S_Nu_N|_{\mathcal{W}^{s,p}} \lesssim |(-H)^{\frac12} S_Nu_N|_{L^2},
$$
so that 
\begin{eqnarray}
\label{bound:F2}
& & \nonumber \left| \int_{\R^2} F_2(S_Nu_N, S_NZ_N) \overline{S_Nu_N}\right| \\
&  \le  & \frac12 \left(\frac{4(\gamma_1+|\gamma_2|)}{\gamma_1} \right)^3 \sum_{l=0}^2 |:S_NZ_{N,R}^l S_NZ_{N,I}^{2-l}:|_{\mathcal{W}^{-s,q}}^4
 +\, \frac{\gamma_1}{2(\gamma_1+|\gamma_2|)} |S_Nu_N|_{L^4}^4 \\
& & \nonumber + \frac{\gamma_1}{4(\gamma_1+|\gamma_2|)} |(-H)^{\frac12}S_Nu_N|_{L^2}^2.
\end{eqnarray}

We finally turn to the term containing $F_1$ in the right hand side of \eqref{bound:F}. 
We easily get, thanks again to \eqref{taylor},
\begin{eqnarray*}
 \left| \int_{\R^2} F_1(S_Nu_N, S_NZ_N) \overline{S_Nu_N}dx\right|
&  \lesssim &|S_N Z_N|_{\mathcal{W}^{-s,q}} |S_Nu_N|_{L^4}^2 |S_Nu_N|_{\mathcal{W}^{s,r}}
\end{eqnarray*}
where $r>2$ is such that $\frac{1}{q'}=\frac12 + \frac1r$. 
Let $m>2$ with $\frac1r=\frac s2 +\frac{1-s}{m}$, so that
$$
|S_Nu_N|_{\mathcal{W}^{s,r}}\lesssim |(-H)^{\frac12}S_Nu_N|_{L^2}^s |S_Nu_N|_{L^m}^{1-s}.
$$
If $m> 4$, we interpolate $L^m$ between $L^4$ and $L^{2m}$, then use the Sobolev embedding $\mathcal{W}^{1,2} \subset L^{2m}$.
If $2<m\le4$, we interpolate $L^m$ between $L^2$ and $L^4$, then use $\mathcal{W}^{1,2} \subset L^2$.
In both cases we obtain, using in addition the Poincar\'e inequality for $(-H)$:
\begin{eqnarray*}
|S_Nu_N|_{\mathcal{W}^{s,r}} & \lesssim & |(-H)^{\frac12}S_Nu_N|_{L^2}^\alpha |S_Nu_N|_{L^4}^{1-\alpha}
\end{eqnarray*}
for some constant $\alpha\in (0,1)$. We deduce that 
\begin{eqnarray}
& & \nonumber \left| \int_{\R^2} F_1(S_Nu_N, S_NZ_N) \overline{S_Nu_N}\right| \\ \nonumber
&  \lesssim  & |S_NZ_N|_{\mathcal{W}^{-s,q}} |S_Nu_N|_{L^4}^{3-\alpha} |(-H)^{\frac12} S_Nu_N|_{L^2}^\alpha 
\\ \nonumber
& \le &  2^{-\frac{3-\alpha}{1-\alpha}} \left( \frac{4(\gamma_1+|\gamma_2|)}{\gamma_1}\right)^{\frac{1}{2-\alpha} (\alpha+\frac{2(3-\alpha)}{1-\alpha})}  |S_NZ_N|_{\mathcal{W}^{-s,q}}^{\frac{4}{1-\alpha}} \\  \label{bound:F1}
&& \hspace{3mm} + \frac{\gamma_1}{2(\gamma_1+|\gamma_2|)}\left[ |S_Nu_N|_{L^4}^4 
+\frac 12 |(-H)^{\frac12}S_Nu_N|_{L^2}^2\right],
\end{eqnarray}
by Young inequality. 

Gathering \eqref{bound:F}--\eqref{bound:F1}, and noticing that $|(-H)^{\frac12}S_Nu_N|_{L^2}\le |(-H)^{\frac12}u_N|_{L^2}$,
leads to 
\begin{eqnarray*}
 \frac{d}{dt} |u_N(t)|_{L^2}^2 +\frac{\gamma_1}{2} |(-H)^{\frac12} u_N(t)|_{L^2}^2
 &  \lesssim & C_0 \sum_{k+l=1}^3 |:S_NZ_{N,R}^l S_NZ_{N,I}^{k}:|_{\mathcal{W}^{-s,q}}^{m_{k,l}}, 
\end{eqnarray*} 
with 
$$C_0 \le \gamma_1 \left(\frac{\gamma_1}{4(\gamma_1+|\gamma_2|)} \right)^{-\kappa}$$
for some integers $m_{k,l}$ and $\kappa=\kappa_{\alpha}>0$.

Now, let $m>0.$ We multiply by $|u_N(t)|_{L^2}^{2m-2}$ both sides of the above inequality to get 
\begin{eqnarray*}
 \frac{1}{m} \frac{d}{dt} |u_N(t)|_{L^2}^{2m} +\frac{\gamma_1}{2} |(-H)^{\frac12} u_N(t)|_{L^2}^2 |u_N(t)|_{L^2}^{2m-2}
& \lesssim & C_0 |u_N(t)|_{L^2}^{2m-2} \sum_{k+l=1}^3 |:S_NZ_{N,R}^l S_NZ_{N,I}^{k}:|_{\mathcal{W}^{-s,q}}^{m_{k,l}}.
\end{eqnarray*}
Applying the interpolation inequality
\begin{eqnarray*}
|(-H)^{\frac{1}{2m}} u_N|_{L^2} \le C_m |(-H)^{\frac{1}{2}} u_N|_{L^2}^{\frac{1}{m}} |u_N|_{L^2}^{\frac{m-1}{m}}
\end{eqnarray*}
to the second term in the left hand side, and using Young inequality in the right hand side, we obtain
\begin{eqnarray*}
 \frac{1}{m} \frac{d}{dt} |u_N(t)|_{L^2}^{2m} +\frac{\gamma_1}{2 C_m^{2m}} |(-H)^{\frac{1}{2m}} u_N(t)|_{L^2}^{2m}
&  \lesssim & C_0 \varepsilon |u_N(t)|_{L^2}^{2m} + C_0 C_{\varepsilon} \sum_{k+l=1}^3 |:S_NZ_{N,R}^l S_NZ_{N,I}^{k}:|_{\mathcal{W}^{-s,q}}^{m'_{k,l}},
\end{eqnarray*}
for any $\varepsilon>0$ and constants $C_{\varepsilon}>0$ and $m'_{k,l}>0$. 
We choose $C_0 \varepsilon=\frac{\gamma_1 \lambda_1^2}{4 C_m^{2m}}$ after  
using Poincar\'{e} inequality so that the first term of the right hand side is absorbed in the left hand side, 
\begin{eqnarray*}
\frac{1}{m} \frac{d}{dt} |u_N(t)|_{L^2}^{2m} +\frac{\gamma_1}{4 C_m^{2m}} |(-H)^{\frac{1}{2m}} u_N(t)|_{L^2}^{2m}
&  \lesssim & C_0 C_{\varepsilon} \sum_{k+l=1}^3 |:S_NZ_{N,R}^l S_NZ_{N,I}^{k}:|_{\mathcal{W}^{-s,q}}^{m'_{k,l}}.
\end{eqnarray*}
Integrating in time, taking expectations on both sides and using the stationarity of $u_N$ and of the Wick
products, together with \eqref{eq:m-moment}, yields 
$$
\E\big(|(-H)^{\frac{1}{2m}}u_N|_{L^2}^{2m}\big) \lesssim_{m}  \left(\frac{\gamma_1}{4(\gamma_1+|\gamma_2|)} \right)^{-\kappa'}  \sum_{k+l=1}^3 M_{s,q,k,l,m'_{k,l}},
$$ 
for some $\kappa'=\kappa'_{\alpha} >0$ and the conclusion.
\hfill
\qed



\medskip


\section{Proof of Theorem \ref{thm:main}}

We are in position to construct a stationary solution of (\ref{eq:wickX}) for any values of $\gamma_1>0$ and $\gamma_2 \in \R$. 
We have seen in the previous section that the system (\ref{eq:coupled}) has a stationary solution $(u_N, Z_N)$ where 
$Z_N=\Pi_N Z^{\gamma_1, \gamma_2}_{\infty}$. Moreover, it is clear that $(u_N, Z^{\gamma_1, \gamma_2}_{\infty})$ is then a stationary solution of   
\begin{equation}
\label{eq:stat}
\left\{ \begin{array}{rcl} \ds
\frac{du}{dt}& =&(\gamma_1+i\gamma_2)\left[ Hu-S_N\big(:|S_N(u+Z)|^2 S_N (u+Z):\big)\right] \\[0.25cm]
dZ &= & (\gamma_1+i\gamma_2) HZ dt +\sqrt{2\gamma_1} dW.
\end{array} \right.
\end{equation}
In this section we will denote $Z^{\gamma_1, \gamma_2}_{\infty}$ by $Z$ for the sake of simplicity. 
Using Proposition \ref{prop:tightness}, we first prove that the the law of this sequence $\{(u_N, Z)\}_{N \in \N}$ is tight 
in an appropriate space to construct a martingale solution.  
\vspace{3mm}

We will use the following lemma (see \cite{dbdf1} for the proof). 
\begin{lem} \label{lem:bilinear} Let $1<p<q<+\infty$, $0<s<\beta<2/p$, and $m \in \N \setminus \{0\}$. 
Suppose 
$\beta-s-(m-1)(\frac{2}{p}-\beta) > 0$, and  $s+m(\frac{2}{p}-\beta)<2(1-\frac{1}{q})$.
Then, there is a constant $C>0$ such that 
\begin{equation*}
|h f^{m}|_{\mathcal{W}^{-(s+m(\frac{2}{p}-\beta)), q}} \le C|h|_{\mathcal{W}^{-s,q}} |f|_{\mathcal{W}^{\beta,p}}^{m}. 
\end{equation*}
\end{lem}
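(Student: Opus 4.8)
The plan is to prove the estimate by induction on $m$, reducing everything to the bilinear case $m=1$, which in turn follows from duality together with the fractional Leibniz rule \eqref{taylor}. Throughout I would write $\theta := \frac2p - \beta > 0$ and $\frac1q + \frac1{q'} = 1$, so that the target space is $\mathcal{W}^{-(s+m\theta),q}$ and the hypotheses read $0<s<\beta$, $\beta<2/p$, (c) $\beta - s - (m-1)\theta > 0$, and (d) $s + m\theta < 2/q'$.

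First I would establish the base case $m=1$: for $0<\tilde s<\beta<2/p$ with $\tilde s + \theta < 2/q'$,
$$|hf|_{\mathcal{W}^{-(\tilde s+\theta),q}} \le C |h|_{\mathcal{W}^{-\tilde s,q}} |f|_{\mathcal{W}^{\beta,p}}.$$
Using the duality $\mathcal{W}^{-(\tilde s+\theta),q} = (\mathcal{W}^{\tilde s+\theta,q'})^*$ and the identity $\langle hf,g\rangle = \langle h, fg\rangle$, it suffices to bound $|fg|_{\mathcal{W}^{\tilde s,q'}}$ by $|f|_{\mathcal{W}^{\beta,p}}|g|_{\mathcal{W}^{\tilde s+\theta,q'}}$ for $g$ in the unit ball. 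This I would extract from \eqref{taylor} with $\alpha=\tilde s$: in the first term I bound $|f|_{L^{q_1}}$ through the Sobolev embedding $\mathcal{W}^{\beta,p}\hookrightarrow L^{q_1}$ (with $1/q_1 = \theta/2$) and $|g|_{\mathcal{W}^{\tilde s,\bar q_1}}$ through $\mathcal{W}^{\tilde s+\theta,q'}\hookrightarrow \mathcal{W}^{\tilde s,\bar q_1}$; in the second term I bound $|f|_{\mathcal{W}^{\tilde s,q_2}}$ through $\mathcal{W}^{\beta,p}\hookrightarrow \mathcal{W}^{\tilde s,q_2}$ (using $\tilde s<\beta$) and $|g|_{L^{\bar q_2}}$ through $\mathcal{W}^{\tilde s+\theta,q'}\hookrightarrow L^{\bar q_2}$. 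A direct computation shows the Hölder relations $1/q_i + 1/\bar q_i = 1/q'$ hold and that all exponents lie in the admissible ranges precisely because $\theta>0$, $\tilde s<\beta<2/p$, and $\tilde s+\theta<2/q'$.

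For the inductive step I would factor $hf^m = (hf^{m-1})f$ and apply the base case with $h$ replaced by $hf^{m-1}$ and with parameter $\tilde s := s + (m-1)\theta$, so that the output regularity is $\tilde s + \theta = s + m\theta$. The base case then demands $\tilde s < \beta$, which is exactly hypothesis (c), and $\tilde s + \theta < 2/q'$, which is exactly hypothesis (d); this is precisely how those two inequalities enter. The remaining factor $|hf^{m-1}|_{\mathcal{W}^{-\tilde s,q}}$ is controlled by the induction hypothesis for $m-1$, whose hypotheses follow from (c) and (d) for $m$ since $\theta>0$ makes both inequalities monotone in $m$. Combining the two bounds closes the induction.

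The routine but slightly delicate part that I expect to be the real work is the base-case bookkeeping: checking that the Sobolev embeddings for the $H$-adapted spaces $\mathcal{W}^{\sigma,\cdot}$ hold with the stated scaling and that each conjugate exponent $q_1,\bar q_1,q_2,\bar q_2$ lies in the range required by \eqref{taylor}, which is where the constraints $\beta<2/p$ and $s+m\theta<2/q'$ are consumed. I would also make the argument rigorous by first proving the estimate for smooth rapidly decreasing $f,h$ and then passing to the limit by density, so that all products and dual pairings are a priori well defined.
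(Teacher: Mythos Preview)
The paper does not actually prove this lemma here; immediately after the statement it says ``see \cite{dbdf1} for the proof'' and moves on. So there is no proof in the present paper to compare against.

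That said, your proposed argument is the standard one and is almost certainly what appears in \cite{dbdf1}. The scheme --- dualize to reduce $|hf|_{\mathcal{W}^{-(\tilde s+\theta),q}}\le C|h|_{\mathcal{W}^{-\tilde s,q}}|f|_{\mathcal{W}^{\beta,p}}$ to a positive-regularity product bound $|fg|_{\mathcal{W}^{\tilde s,q'}}\lesssim |f|_{\mathcal{W}^{\beta,p}}|g|_{\mathcal{W}^{\tilde s+\theta,q'}}$, then feed the fractional Leibniz rule \eqref{taylor} together with the Sobolev embeddings $\mathcal{W}^{\beta,p}\hookrightarrow L^{2/\theta}$ and $\mathcal{W}^{\tilde s+\theta,q'}\hookrightarrow \mathcal{W}^{\tilde s,\bar q_1}$ (and the symmetric pair), and finally induct on $m$ via $hf^m=(hf^{m-1})f$ --- is exactly how such negative-index product estimates are obtained. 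Your identification of the role of the two hypotheses is correct: the condition $\beta-s-(m-1)\theta>0$ is precisely $\tilde s<\beta$ at the top level of the induction, and $s+m\theta<2/q'$ is what keeps the target exponent in range; both are monotone in $m$, so the induction hypothesis at level $m-1$ is automatically available.

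The only point worth flagging, which you already anticipate, is that the spaces $\mathcal{W}^{\sigma,r}$ here are the Hermite--Sobolev spaces built on $(-H)^{\sigma/2}$ rather than the usual Bessel-potential spaces, so one must know that the Sobolev embeddings and the Kato--Ponce-type inequality \eqref{taylor} hold in this setting; the paper takes these for granted (citing \cite{taylor}) and you are right to do the same. With that understood, your proof is complete once the exponent bookkeeping is written out.
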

\vspace{3mm}

\begin{lem} \label{lem:uz_tendu}Let $\gamma_1>0$, $\gamma_2 \in \R$, 
$0<s<1$, and $q>2$ such that $qs>2$. Let also $0<\delta<\frac16$, and let $p> \max \{q, 24, \frac{2}{\delta} \}$. 
The sequence $(u_N, Z)_{N\in \N}$ is bounded in 
$$ L^{2m}(\Omega, L^{2m}(0,T, H^{\frac{1}{m}})) \cap L^{\frac{4}{3}}(\Omega, \mathcal{W}^{1,\frac{4}{3}}(0,T, \mathcal{W}^{-2,p})) \times C^{\alpha}([0,T], \mathcal{W}^{-s,q}\cap \mathcal{W}^{-\delta,p})$$
for any $m > 0$, and $\alpha>0$ satisfying $\alpha<\min(\frac{s}{2}-\frac{1}{q}, \frac{\delta}{2}-\frac1p, 1)$. 
\end{lem}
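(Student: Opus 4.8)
The plan is to treat the two factors of the product space separately, the $Z$-component being essentially immediate and the $u_N$-component carrying the bulk of the work. Since the driving process in \eqref{eq:stat} is the full, un-projected $Z=Z_\infty^{\gamma_1,\gamma_2}$, the $Z$-component of the sequence does not in fact depend on $N$, so its boundedness in $C^{\alpha}([0,T],\mathcal{W}^{-s,q}\cap\mathcal{W}^{-\delta,p})$ reduces to the almost-sure (and moment) finiteness of a single process. This follows directly from Lemma \ref{lem:Kolmogorov} applied twice: once with the pair $(s,q)$ and once with $(\delta,p)$. The hypotheses hold because $p>2/\delta$ gives $\delta p>2$, $p>24\ge 2$, and the two upper bounds $\alpha<\frac{s}{2}-\frac1q$ and $\alpha<\frac{\delta}{2}-\frac1p$ are precisely the Hölder thresholds appearing in Lemma \ref{lem:Kolmogorov}; a common modification then lies in the intersection.

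For the first space of the $u_N$-factor I would simply combine Proposition \ref{prop:tightness} with stationarity: since $\E(|(-H)^{1/(2m)}u_N(t)|_{L^2}^{2m})=\E(|u_N(t)|_{H^{1/m}}^{2m})$ is independent of $t$ and bounded by $C_{m,\gamma_1,\gamma_2}$, Fubini gives $\E\int_0^T|u_N|_{H^{1/m}}^{2m}\,dt\le T\,C_{m,\gamma_1,\gamma_2}$, uniformly in $N$.

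The real content is the bound in $L^{\frac43}(\Omega,\mathcal{W}^{1,\frac43}(0,T,\mathcal{W}^{-2,p}))$, which amounts to controlling $\partial_t u_N$ in $L^{\frac43}(\Omega\times(0,T);\mathcal{W}^{-2,p})$ through the first equation of \eqref{eq:stat}. The linear part is harmless: $|Hu_N|_{\mathcal{W}^{-2,p}}=|u_N|_{L^p}\lesssim|u_N|_{H^1}$ by the embedding $\mathcal{W}^{1,2}\subset L^p$, and $\E\int_0^T|u_N|_{H^1}^{4/3}\,dt$ is finite by the $m=1$ case of Proposition \ref{prop:tightness} and Jensen. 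For the nonlinearity $S_N F(S_N u_N,(:Z^l:))$ I would use that $S_N$ is bounded on every $\mathcal{W}^{\sigma,p}$ uniformly in $N$ (it commutes with $H$ and is uniformly $L^p$-bounded) and estimate the four pieces of \eqref{def:F} separately. The purely Gaussian term $F_3$ lies in $\mathcal{W}^{-\delta,p}\subset\mathcal{W}^{-2,p}$ and is controlled by \eqref{eq:m-moment} and stationarity. For $F_1$ and $F_2$, linear resp. quadratic in $u_N$ with one resp. two Wick factors, I would invoke Lemma \ref{lem:bilinear} with $h$ the Wick factor taken in $\mathcal{W}^{-\delta,p}$ and $f=S_N u_N$ in a suitable $\mathcal{W}^{\beta,p_0}$ (the choices $0<\delta<\frac16$ and $p>24$ being made exactly so that its hypotheses hold and the output lands in $\mathcal{W}^{-2,p}$), reducing these to products of Wick-norms with powers of $|u_N|_{H^1}$ that are strictly below $4$; a Hölder split in $(\omega,t)$, the pointwise-in-$t$ moments of Proposition \ref{prop:tightness}, and the finiteness of all Wick moments \eqref{eq:m-moment} then close them.

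The main obstacle is the cubic term $F_0=|S_N u_N|^2 S_N u_N$. Bounding it as $|S_N F_0|_{\mathcal{W}^{-2,p}}\lesssim|F_0|_{L^{4/3}}=|S_N u_N|_{L^4}^3$ via $L^{4/3}\hookrightarrow\mathcal{W}^{-2,p}$, the required estimate reduces to the quartic space-time bound $\E\int_0^T|S_N u_N|_{L^4}^4\,dt\le C$, which is \emph{not} supplied by the diagonal moments $\E|(-H)^{1/(2m)}u_N|_{L^2}^{2m}$. I would obtain it by revisiting the energy identity \eqref{bound:F}: each forcing contribution from $F_1,F_2,F_3$ carries a power of $|S_N u_N|_{L^4}$ strictly smaller than $4$, so choosing the Young constants appropriately lets one retain a fixed positive multiple, say $\frac{\gamma_1}{2}|S_N u_N|_{L^4}^4$, of the quartic dissipation on the left, at the sole cost of larger but still finite Wick moments on the right; this works for every $\gamma_1>0$. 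Integrating in time, taking expectations, and using stationarity together with \eqref{eq:m-moment} yields the $N$-uniform quartic bound, hence the $F_0$ estimate. Uniformity in $N$ throughout is guaranteed since all constants — from Proposition \ref{prop:tightness}, the Wick moments, the $L^p$-boundedness of $S_N$, and Lemma \ref{lem:bilinear} — are independent of $N$.
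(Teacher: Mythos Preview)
Your proposal is correct and follows the same overall architecture as the paper's proof: the $Z$-bound via Lemma~\ref{lem:Kolmogorov}, the $L^{2m}$ bound from Proposition~\ref{prop:tightness} plus stationarity, and the $\mathcal{W}^{1,4/3}$ time-regularity by estimating each piece $F_0,\dots,F_3$ of the nonlinearity with Lemma~\ref{lem:bilinear} and the Wick moments \eqref{eq:m-moment}.

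The one substantive difference concerns $F_0$. You assert that the quartic bound $\E\int_0^T|S_N u_N|_{L^4}^4\,dt\le C$ is \emph{not} supplied by the diagonal moments of Proposition~\ref{prop:tightness}, and therefore revisit the energy identity \eqref{bound:F} to extract the $L^4$ dissipation directly. That route is valid, but the premise is mistaken: the paper simply takes $m=2$ in Proposition~\ref{prop:tightness}, obtaining $\E(|u_N|_{\mathcal{W}^{1/2,2}}^4)\le C_{2,\gamma_1,\gamma_2}$, and then uses the two-dimensional Sobolev embedding $\mathcal{W}^{1/2,2}\subset L^4$. This closes $F_0$ in one line without reopening the energy estimate. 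Similarly, for $F_1$ and $F_2$ the paper chooses the Lemma~\ref{lem:bilinear} exponents so as to land exactly on the diagonal moments $\E(|u_N|_{\mathcal{W}^{17/24,2}}^{48/17})$ and $\E(|u_N|_{\mathcal{W}^{5/6,2}}^{12/5})$ (i.e.\ $m=24/17$ and $m=6/5$), rather than reducing everything to powers of $|u_N|_{H^1}$ as you sketch. Your scheme for $F_1,F_2$ is fine, just less explicit about why the H\"older split matches the available moments.
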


\begin{proof} 
It suffices to check the bound in $L^{\frac{4}{3}}(\Omega, \mathcal{W}^{1,\frac{4}{3}}(0,T, \mathcal{W}^{-2,p}))$, since the
other bounds follow from Proposition \ref{prop:tightness}, the stationarity of $u_N$, and Lemma \ref{lem:Kolmogorov} for $Z$. 
We write the equation for $u_N$; 
\begin{eqnarray*}
{u}_N(t)  
&=& {u}_N(0) + (\gamma_1+i\gamma_2) \int_0^t H {u}_N (\sigma) d\sigma \\ 
&& -(\gamma_1+i\gamma_2) 
\int_0^t S_N(:|S_N({u}_N+ {Z}_N)|^2 S_N ({u}_N+{Z}_N):) (\sigma) d\sigma.   
\end{eqnarray*}
In the right hand side, the first term is constant and clearly bounded 
in $L^{\frac{4}{3}}(\Omega, \mathcal{W}^{1,\frac{4}{3}}(0,T, \mathcal{W}^{-2,p}))$ 
by Proposition \ref{prop:tightness} and H\"{o}lder inequality. For the second term, we have 
\begin{eqnarray*}
\E \left(\left|\int_0^{\cdot} H u_N(\sigma) d\sigma \right|^{\frac{4}{3}}_{\mathcal{W}^{1,\frac{4}{3}}(0,T, \mathcal{W}^{-2,p})}\right) 
&\le& C_T \E\left(|H u_N|^{\frac{4}{3}}_{L^{\frac{4}{3}}(0,T, \mathcal{W}^{-2,p})} \right) \\
&\le & C_T \E \left(|u_N|^2_{L^2(0,T, \mathcal{W}^{1,2})} \right),
\end{eqnarray*}
where we have used Sobolev embedding and H\"{o}lder inequality in the last inequality. 
To estimate the nonlinear terms, we decompose as in (\ref{def:F}), 
\begin{eqnarray*}
S_N(:|S_N (u_N+Z)|^2 S_N (u_N+Z):)&=&S_N (F_0(S_N u_N)+ F_1(S_N u_N, S_N Z) \\
&& \hspace{3mm} +F_2(S_N u_N, S_N Z)+F_3(S_N Z)).
\end{eqnarray*}
The terms in $F_3$ are simply estimated thanks to Proposition \ref{prop:ReImCauchy} as follows.
$$
\E \left( \left|\int_0^{\cdot} S_N F_3 (s) ds \right|^{\frac{4}{3}}_{\mathcal{W}^{1,\frac{4}{3}}(0,T, \mathcal{W}^{-2,p})}\right) 
\le C_T \E (|S_N F_3|_{L^{\frac{4}{3}}(0,T, \mathcal{W}^{-s,p})}^{\frac{4}{3}}) 
\le C_T M_{s,p}^{\frac{4}{3}}. 
$$
For the term $F_0$, using Sobolev embeddings $L^{\frac43} \subset \mathcal{W}^{-2,p}$ 
and $\mathcal{W}^{\frac12, 2} \subset L^4$, we obtain  
\begin{eqnarray*}
&& \E (|S_N F_0|_{L^{\frac{4}{3}}(0,T, \mathcal{W}^{-2,p})}^{\frac{4}{3}}) 
= \int_0^T \E(||S_N u_N|^2 S_N u_N|^{\frac{4}{3}}_{\mathcal{W}^{-2,p}}) ds \\
& & \le  C_T \E(||S_N u_N|^2 S_N u_N |^{\frac{4}{3}}_{L^{\frac{4}{3}}}) = C_T \E(| S_N u_N|_{L^4}^4) \le C_T \E(|u_N|^4_{\mathcal{W}^{\frac12, 2}}),
\end{eqnarray*}
which is bounded independently of $N$ by Proposition \ref{prop:tightness}.
To estimate the $F_1$-terms, we fix $s'>0$ such that $s'<\frac{1}{12}$ and $s'p>2$, and apply Lemma \ref{lem:bilinear} to get
\begin{eqnarray*}
\E\left(\left|S_N F_1\right|_{L^{\frac{4}{3}}(0,T, \mathcal{W}^{-2,p})}^{\frac{4}{3}}\right) 
\le C_T \E \left(|F_1|_{\mathcal{W}^{-(s'+\frac{14}{24}), p}}^{\frac{4}{3}} \right)
&\le& C_T \E \left(|S_N Z|_{\mathcal{W}^{-s',p}}^{\frac{4}{3}} |S_N u_N|_{\mathcal{W}^{\frac{3}{8},3}}^{\frac{8}{3}} \right).
\end{eqnarray*}
Using then the Sobolev embedding $\mathcal{W}^{\frac{17}{24},2} \subset \mathcal{W}^{\frac{3}{8},3}$ and H\"older inequality, 
the right hand side is majorized by 
\begin{eqnarray*}
C_T \E(|S_N Z|_{\mathcal{W}^{-s',p}}^{24})^{\frac{1}{18}}~ \E(|S_N u_N|^{\frac{48}{17}}_{\mathcal{W}^{\frac{17}{24},2}})^{\frac{17}{18}},  
\end{eqnarray*}
and is thus bounded independently of $N$ thanks to Propositions \ref{prop:ReImCauchy} and \ref{prop:tightness}. 
Finally, for the terms in $F_2$, we apply again Lemma \ref{lem:bilinear}: 
\begin{eqnarray*}
\E\left(\left|S_N F_2\right|_{L^{\frac{4}{3}}(0,T, \mathcal{W}^{-2,p})}^{\frac{4}{3}}\right) 
&\le& T\E \left(|F_2|_{\mathcal{W}^{-(s'+\frac{1}{6}), p}}^{\frac{4}{3}} \right)\\
&\le& C_T \E(\sum_{k+l=2}|:(S_N Z_R)^k (S_N Z_I)^l :|_{\mathcal{W}^{-s',p}}^{\frac{4}{3}} |S_N u_N|_{\mathcal{W}^{\frac13,4}}^{\frac43})\\
&\le & C_T \sum_{k+l=2} \E(|:(S_N Z_R)^k (S_N Z_I)^l :|_{\mathcal{W}^{-s',p}}^{3})^{\frac49}~ \E(|u_N|_{\mathcal{W}^{\frac56,2}}^{\frac{12}{5}})^{\frac59}, 
\end{eqnarray*}
which is bounded again by Propositions \ref{prop:ReImCauchy} and \ref{prop:tightness}. Note that  
in the last inequality we have used the Sobolev embedding $\mathcal{W}^{\frac56, 2} \subset \mathcal{W}^{\frac13,4}$ and H\"older inequality. 
\end{proof}

\begin{rem} \label{rem:compactness}
We note that by Lemma \ref{lem:uz_tendu}, $(u_N)_{N\in \N}$ is bounded in 
\begin{eqnarray*}
&& L^3(\Omega, L^3(0,T, \mathcal{W}^{\frac23,2})) \cap L^{\frac43}(\Omega, \mathcal{W}^{1,\frac43}(0,T, \mathcal{W}^{-2,p})) \\
&& \hspace{3cm} \subset 
L^{\frac43}(\Omega, L^3(0,T, \mathcal{W}^{\frac23,2})) \cap L^{\frac43}(\Omega, \mathcal{W}^{\frac{1}{12},3}(0,T, \mathcal{W}^{-2,p})), 
\end{eqnarray*}
and $L^3(0,T, \mathcal{W}^{\frac23,2}) \cap \mathcal{W}^{\frac{1}{12},3}(0,T, \mathcal{W}^{-2,p})$ is compactly embedded in $L^3(0,T, L^4_x)$.
On the other hand, $(u_N)_{N\in \N}$ is also bounded in 
\begin{eqnarray*}
&& L^2(\Omega, L^2(0,T, \mathcal{W}^{1,2})) \cap L^{\frac43}(\Omega, \mathcal{W}^{1,\frac43}(0,T, \mathcal{W}^{-2,p})) \\ 
&& \hspace{3cm} \subset L^{\frac43}(\Omega, L^2(0,T, \mathcal{W}^{1,2}) \cap \mathcal{W}^{\frac{1}{12},2}(0,T,\mathcal{W}^{-2,p})), 
\end{eqnarray*}
and $L^2(0,T, \mathcal{W}^{1,2}) \cap \mathcal{W}^{\frac{1}{12},2}(0,T,\mathcal{W}^{-2,p})$ is compactly embedded
in $L^2(0,T,\mathcal{W}^{s,2})$ for any $s$ with $0 \le s <1$. In particular, since  
$\mathcal{W}^{\frac56, 2} \subset \mathcal{W}^{\frac13,4}$, the embedding is compact in  
$L^2(0,T, \mathcal{W}^{\frac13,4}).$ 
Finally, we note that $\mathcal{W}^{1,\frac43}(0,T,\mathcal{W}^{-2,p})$ is compactly embedded in 
$C([0,T], \mathcal{W}^{-3,p})$. 
\end{rem}

\begin{proof}[Proof of Theorem \ref{thm:main}] Let $\alpha>0$ satisfy the condition in Lemma \ref{lem:uz_tendu}.  
We deduce from Lemma \ref{lem:uz_tendu}, Remark \ref{rem:compactness} and Markov inequality that the sequence $\{(u_N, Z)_{N\in \N}\}$ is tight in 
\begin{equation} \label{space:compactness}
L^3(0,T, L^4) \cap L^2(0,T, \mathcal{W}^{\frac56,2}) \cap C([0,T], \mathcal{W}^{-3,p}) \times C^{\beta}([0,T], \mathcal{W}^{-s', q}\cap \mathcal{W}^{-\delta,p})
\end{equation}
for any $\beta <\alpha$ and $s'> s$. Fix such $\beta$ and $s'$. By Prokhorov Theorem, there exists a subsequence, still denoted $\{(u_N, Z)_{N\in \N}\}$ which converges 
in law to a measure $\nu$ on the space \eqref{space:compactness}.
By Skorokhod Theorem, there exist $(\tilde{\Omega}, \tilde{\mathcal{F}}, \tilde{\prob})$, $(\tilde{u}_N, \tilde{Z}_N)_{N \in \N}$ and $(\tilde{u}, \tilde{Z})$ 
taking values in the same space \eqref{space:compactness}, satisfying $\mathcal{L}((u_N, Z))=\mathcal{L}((\tilde{u}_N, \tilde{Z}_N))$ for any $N \in \N$, $\mathcal{L}((\tilde{u}, \tilde{Z}))=\nu$, 
and $\tilde{u}_N$ converges to  $\tilde{u}$, $\tilde{\prob}$-a.s. in $L^3(0,T, L^4) \cap L^2(0,T, \mathcal{W}^{\frac56,2}) \cap C([0,T], \mathcal{W}^{-3,p})$, 
$\tilde{Z}_N$  converges to $\tilde{Z}$, $\tilde{\prob}$-a.s. in $C^{\beta}([0,T], \mathcal{W}^{-s', q}\cap \mathcal{W}^{-\delta,p})$. 
Moreover, by diagonal extraction, it can be assumed that this holds for any $T>0$. It is easily seen that $(\tilde{u}, \tilde{Z})$ is a stationary process thanks to 
the convergence of $(\tilde{u}_N, \tilde{Z}_N)$ to $(\tilde{u}, \tilde{Z})$ in $C([0,T], \mathcal{W}^{-3,p}) \times C^{\beta}([0,T], \mathcal{W}^{-s', q})$. This convergence also implies
$\mathcal{L}(Z)=\mathcal{L}(\tilde{Z})$. Note also that if we extract from the subsequence we took for the weak convergence $\tilde{\rho}_N$ to $\rho$, 
we have that 
for each $t\in \R$, $\mathcal{L}(\tilde X(t) )=\mathcal{L}(\tilde u (t)+\tilde Z(t))=\rho$.

Write then, 
\begin{eqnarray} \nonumber
\tilde{u}_N(t)- \tilde{u}_N(0) 
&=& (\gamma_1+i\gamma_2) \int_0^t H \tilde{u}_N (\sigma) d\sigma \\ \label{eq:uz_N}
&& -(\gamma_1+i\gamma_2) 
\int_0^t S_N(:|S_N(\tilde{u}_N+ \tilde{Z}_N)|^2 S_N (\tilde{u}_N+\tilde{Z}_N):) (\sigma) d\sigma.   
\end{eqnarray}
It remains us to show that the right hand side of \eqref{eq:uz_N} converges, up to a subsequence, to 
\begin{eqnarray*} 
(\gamma_1+i\gamma_2) \int_0^t H \tilde{u} (\sigma) d\sigma  
-(\gamma_1+i\gamma_2) 
\int_0^t :|(\tilde{u}+ \tilde{Z})|^2 (\tilde{u}+\tilde{Z}): (\sigma) d\sigma,  
\end{eqnarray*}
$\tilde{\prob}$-a.s. in $C([0,T], \mathcal{W}^{-2,p})$. This can be checked as follows. First, the convergence of 
the linear term follows from the convergence of $\tilde{u}_N$ to $\tilde{u}$ 
in $L^2(0,T, \mathcal{W}^{\frac56,2}) \subset L^1(0,T, \mathcal{W}^{-2,p})$. In order to prove the convergence 
of nonlinear terms, we again decompose the nonlinear terms into the four terms $F_0, \dots,F_3$ as in (\ref{def:F}) and estimate them separately, i.e.,
\begin{eqnarray*}
&& \sup_{t\in [0,T]} \left|\int_0^t :|\tilde{u}+\tilde{Z}|^2 (\tilde{u}+\tilde{Z}): (\sigma)d\sigma 
-\int_0^t S_N(:|S_N(\tilde{u}_N+ \tilde{Z}_N)|^2 S_N (\tilde{u}_N+\tilde{Z}_N):) (\sigma) d\sigma  \right|_{\mathcal{W}^{-2,p}}\\ 
& \le & \sum_{k=0}^3 \int_0^T |F_k(\tilde{u}, \tilde{Z})-S_N F_k(S_N \tilde{u}_N, S_N \tilde{Z}_N)|_{\mathcal{W}^{-2,p}} d\sigma \\
&=& I_0^N+I_1^N+I_2^N+I_3^N.
\end{eqnarray*}
We begin with the convergence of $I_0^N$. 
\begin{eqnarray*}
I_0^N &=& \int_0^T |S_N(|S_N \tilde{u}_N|^2 S_N \tilde{u}_N) -|\tilde{u}|^2 \tilde{u}|_{\mathcal{W}^{-2,p}} d\sigma\\
&\le& \int_0^T ||S_N \tilde{u}_N|^2 S_N \tilde{u}_N - |\tilde{u}|^2 \tilde{u}|_{L^{\frac43}} d\sigma 
+ \int_0^T |(S_N -I) |\tilde{u}|^2 \tilde{u}|_{L^{\frac43}} d\sigma. 
\end{eqnarray*}
Since $\tilde{u}_N $ converges to $\tilde{u}$ a.s. in $L^3(0,T, L^4_x)$, by dominated convergence, the same holds for $S_N \tilde{u}_N$,
thus $|S_N \tilde{u}_N|^2 S_N \tilde{u}_N$ converges to $|\tilde{u}|^2 \tilde{u}$ a.s. in $L^{\frac{4}{3}}(0,T, L^{\frac43})$. 
Similarly, the second term converges to zero by dominated convergence, 
therefore, $I_0^N$ converges to $0$. Next, we use Lemma \ref{lem:bilinear}
to obtain
\begin{eqnarray*}
I_1^N &\le& \int_0^T |F_1(\tilde{u}, \tilde{Z})-S_N F_1(S_N \tilde{u}_N, S_N \tilde{Z}_N)|_{\mathcal{W}^{-(\delta+\frac13),p}} d\sigma \\
&\lesssim& |\tilde{Z} - S_N \tilde{Z}_N|_{C([0,T],\mathcal{W}^{-\delta,p})} \left(|S_N \tilde{u}_N|^2_{L^2(0,T, \mathcal{W}^{\frac13,4})} + 
|\tilde{u}|^2_{L^2(0,T, \mathcal{W}^{\frac13,4})}\right) \\
&& \hspace{3mm}+  \left(|S_N \tilde{Z}_N|_{C([0,T], \mathcal{W}^{-\delta,p})} + 
|\tilde{Z}|_{C([0,T], \mathcal{W}^{-\delta,p})}\right)
\left(|S_N \tilde{u}_N|_{L^2(0,T, \mathcal{W}^{\frac13,4})} + 
|\tilde{u}|_{L^2(0,T, \mathcal{W}^{\frac13,4})}\right) \\
&& \hspace{5cm} \times |S_N \tilde{u}_N - \tilde{u}|_{L^2(0,T, \mathcal{W}^{\frac13,4})}.
\end{eqnarray*}
Hence, $I_1^N$ converges to $0$  since $S_N \tilde{Z}_N$ converges to $\tilde{Z}$ in $C([0,T], \mathcal{W}^{-\delta,p})$ and, by Remark \ref{rem:compactness},
$S_N \tilde{u}_N$ converges to $\tilde{u}$ in $L^2(0,T, \mathcal{W}^{\frac13,4})$. 
Concerning $F_2$, we proceed as for $F_1$, and we use Lemma \ref{lem:bilinear} in the same way;
\begin{eqnarray*}
I_2^N &\lesssim& \sum_{k+l=2} (|:\tilde{Z}_R^k \tilde{Z}_I^l :|_{L^2(0,T, \mathcal{W}^{-\delta,p})} +|S_N(:(S_N \tilde{Z}_{N,R})^k (S_N \tilde{Z}_{N,I})^l :)|_{L^2(0,T, \mathcal{W}^{-\delta,p})} ) \\
&& \times |S_N \tilde{u}_N -\tilde{u}|_{L^2(0,T, \mathcal{W}^{\frac13,4})} \\
&& + \sum_{k+l=2}  |:\tilde{Z}_R^k \tilde{Z}_I^l :- S_N(: (S_N \tilde{Z}_{N,R})^k (S_N \tilde{Z}_{N,I})^l :) |_{L^2(0,T, \mathcal{W}^{-\delta,p})} \\
&& \times \left(|S_N \tilde{u}_N|_{L^2(0,T, \mathcal{W}^{\frac13,4})} + |\tilde{u}|_{L^2(0,T, \mathcal{W}^{\frac13,4})} \right).
\end{eqnarray*}
Note that Proposition \ref{prop:ReImCauchy} implies the convergence to zero of the second term in $L^p(\Omega)$, thus extracting a subsequence, 
$\tilde{\prob}$-a.s.  The first term goes to $0$, too since, again, $S_N \tilde{u}_N$ converges to $\tilde{u}$ in $L^2(0,T, \mathcal{W}^{\frac13,4})$. 
The term $I_3^N$ can be treated similarly. 
We deduce from this convergence result that $\tilde u$ satisfies
$$
\tilde{u}(t)- \tilde{u}(0) 
= (\gamma_1+i\gamma_2) \int_0^t H \tilde{u} (\sigma) d\sigma  -(\gamma_1+i\gamma_2) 
\int_0^t :|(\tilde{u}+ \tilde{Z})|^2 (\tilde{u}+\tilde{Z}):) (\sigma) d\sigma.   
$$
Since it is clear that $\tilde Z$ satisfies the second equation in \eqref{eq:stat}, we easily deduce that $\tilde X=\tilde u +\tilde Z$ is a stationary solution of
\eqref{eq:wickX}  on $(\tilde \Omega, \tilde F, \tilde \prob)$. Moreover, it is not difficult to prove that $\tilde u$ is continuous with values in 
$\mathcal{W}^{-s,q}$, which ends the proof of Theorem \ref{thm:main}.
\end{proof}
\vspace{3mm}

{\bf Acknowledgements.} 
This work was supported by JSPS KAKENHI Grant Numbers JP19KK0066, JP20K03669. 
A. Debussche is partially supported by the French government thanks to the "Investissements d'Avenir" program ANR-11-LABX-0020-0, 
Labex Centre Henri Lebesgue.

\end{document}